\documentclass[11pt,twoside]{article}
\usepackage{fullpage}

\usepackage[T1]{fontenc}      
\usepackage{microtype}       
\usepackage{comment}
\usepackage{textcomp}
\usepackage{soul}            
\setuldepth{Berlin}
\usepackage{enumitem}        

\usepackage{amsmath, amsthm, amssymb, amsfonts}
\usepackage{mathtools}       
\usepackage{bm}              
\usepackage{bbm}             
\usepackage{nicefrac}
\usepackage{thm-restate}     

\usepackage{graphicx}
\usepackage{subfigure}       
\usepackage{epsf}
\usepackage{xcolor}
\usepackage{booktabs}        
\usepackage{makecell}        
\usepackage{adjustbox}
\usepackage{tikz}
\usetikzlibrary{positioning,decorations.pathreplacing,arrows.meta,calc,shapes.geometric, shapes.misc}

\usepackage[ruled, vlined, lined, commentsnumbered]{algorithm2e}

\usepackage{natbib}
\setcitestyle{round}

\usepackage[
    colorlinks,
    citecolor={blue!60!black},
    urlcolor={blue!70!black},
    linkcolor={red!60!black},
    breaklinks,
    hypertexnames=false
]{hyperref}

\usepackage[capitalize,nameinlink]{cleveref}[0.19]
\usepackage{etoc} 

\newtheorem{theorem}{Theorem}
\newtheorem{example}{Example}

\newtheorem{proposition}{Proposition}
\newtheorem{lemma}{Lemma}
\newtheorem{corollary}{Corollary}
\newtheorem{definition}{Definition}
\newtheorem{remark}{Remark}

\newlength{\widebarargwidth}
\newlength{\widebarargheight}
\newlength{\widebarargdepth}

\makeatletter
\long\def\@makecaption#1#2{
       \vskip 0.8ex
       \setbox\@tempboxa\hbox{\small {\bf #1:} #2}
       \parindent 1.5em 
       \dimen0=\hsize
       \advance\dimen0 by -3em
       \ifdim \wd\@tempboxa >\dimen0
               \hbox to \hsize{
                       \parindent 0em
                       \hfil 
                       \parbox{\dimen0}{\def\baselinestretch{0.96}\small
                               {\bf #1.} #2
                               } 
                       \hfil}
       \else \hbox to \hsize{\hfil \box\@tempboxa \hfil}
       \fi
       }
\makeatother

\newenvironment{fminipage}%
  {\begin{Sbox}\begin{minipage}}%
  {\end{minipage}\end{Sbox}\fbox{\TheSbox}}

\DeclareMathOperator*{\argmin}{argmin}

\crefname{section}{section}{sections}
\crefname{subsection}{subsection}{subsections}
\Crefname{section}{Section}{Sections}
\Crefname{subsection}{Subsection}{Subsections}
\Crefname{figure}{Figure}{Figures}

\crefformat{equation}{\textup{#2(#1)#3}}
\crefrangeformat{equation}{\textup{#3(#1)#4--#5(#2)#6}}
\crefmultiformat{equation}{\textup{#2(#1)#3}}{ and \textup{#2(#1)#3}}{, \textup{#2(#1)#3}}{, and \textup{#2(#1)#3}}

\Crefformat{equation}{#2Equation~\textup{(#1)}#3}
\Crefrangeformat{equation}{Equations~\textup{#3(#1)#4--#5(#2)#6}}

\AddToHook{env/lemma/begin}{\crefalias{theorem}{lemma}}
\AddToHook{env/corollary/begin}{\crefalias{theorem}{corollary}}
\AddToHook{env/proposition/begin}{\crefalias{theorem}{proposition}}
\AddToHook{env/definition/begin}{\crefalias{theorem}{definition}}
\crefdefaultlabelformat{#2\textup{#1}#3}

\begin{document}
\begin{center}

  {\bf{\LARGE{On the $\Omega(n)$ SDP relaxation gap for Submodular Box-Constrained  Quadratic Programming}}}

\vspace*{.2in}

{\large{
\begin{tabular}{ccc}
Junyu Zhang$^{*}$, Guanyi Wang$^{\dag}$
\end{tabular}
}}
\vspace*{.2in}

\begin{tabular}{c}
$^{*,\dag}$Department of Industrial Systems Engineering and Management, \\
National University of Singapore\\
$^{*}$junyuz@nus.edu.sg, \quad $^{\dag}$guanyi.w@nus.edu.sg
\end{tabular}

\vspace*{.2in}

\today

\vspace*{.2in}

\begin{abstract}
Continuous submodular optimization is an important class of nonconvex problems with global guarantees, for which the submodular box-constrained quadratic programming (BCQP) forms a fundamental subclass. It arises naturally as quadratic subproblems of general twice differentiable submodular problems and also has direct business and management applications such as pricing. This paper studies the worst-case relaxation gap of semidefinite programming (SDP) relaxations with instance-independent finite family of valid linear cuts for submodular BCQP. We show that in dimension $n\geq4$, for any SDP relaxation formulated with a finite family of linear cuts, there always exists a submodular BCQP instance with a strictly positive relaxation gap. By restricting to the normalized BCQP instances that removes the problem's scaling factor, we further quantify this limitation with explicit lower bounds. In general dimension $n\geq4$, when the SDP relaxation is formulated with only Boolean-quadric-polytope (BQP) valid cuts, including the commonly used McCormick and triangle inequalities, we establish an $\Omega(n)$ relaxation gap lower bound. For an arbitrary family of $m$ valid linear cuts, we establish an $\Omega(n/m^2)$ lower bound. These results capture the fundamental limitations of SDP relaxations, especially SDP with BQP valid cuts, and provide guidance on what additional constraints may be needed for more effective convex approximations of submodular BCQP.

\vspace{2ex} 
\noindent\textbf{Keywords:} submodular quadratic programming;  semidefinite relaxation;  relaxation gap;  lower bound
\end{abstract}
\end{center}

\section{Introduction}\label{sec:Intro}
Continuous submodular optimization \citep{bian2017continuous,bach2018efficient,bach2019submodular,axelrod2020near} is a natural generalization of the classical discrete submodular optimization \citep{lovasz1983submodular}, which may lead to global optimality guarantees regardless of nonconvexity \citep{bach2019submodular}. 
In this paper, we focus on an important subclass of continuous submodular optimization with quadratic objective function: 
\begin{equation}
\label{eq:boxqp}
\min_{x}\; x^\top Qx+c^\top x,
\quad\mbox{s.t.}\quad x\in[0,1]^n, 
\end{equation} 
where $Q_{ij}\leq0, \forall i\neq j$. And we call it submodular box-constrained quadratic programming (BCQP). This form of problem naturally arises from the quadratic subproblems of the when solving general twice differentiable submodular problems \citep{bach2018efficient}, and itself also has direct applications in the distributionally robust optimization problems from pricing \citep{burer2025semidefinite}. An existing randomized approximation algorithm \citep{axelrod2020near} for general continuous submodular optimization exhibits an 
$\widetilde{O}(nL^6/\epsilon^6)$ complexity, where $L$ is the function Lipschitz constant under 
$\ell_\infty$-norm. For the set of normalized submodular BCQP instances that removes a problem's own scaling issue:
\begin{equation} 
\label{eq:normal-QP}
\mathcal{N}_\infty^n:= \left\{(Q,c)\in\mathbb{S}^{n}\times\mathbb{R}^n:
\begin{array}{lll}
    \|Q\|_\infty\leq1,\,\,\|c\|_\infty\leq1\\
    Q_{ij} \leq 0, \forall ~ i\neq j\in[n]
\end{array} \right\},
\end{equation} 
where $\|Q\|_{\infty} := \max_{i,j} |Q_{i,j}|$,  we have $L = O(n^2)$, leading to a worst-case complexity of $\widetilde{O}(n^{13}/\epsilon^6)$. Therefore, more efficient QP-specific algorithms such as semidefinite programming (SDP) relaxations still remain interesting. Indeed, in \cite{bach2018efficient}, their quadratic subproblems are approximated by the following SDP:
$$\min_{x,X}\,\,Q\bullet X+c^\top x\quad\mathrm{s.t.}\quad X\leq \mathbf{1}x^\top,\,\,\, 
\left(\begin{aligned}
    1 \,\,\,& \,\,\,\,x^\top_B\\[-2mm]
    \,\,x_B & \,\,X_{BB}
\end{aligned}\right)\succeq 0,\,\,\,
\forall B=\{i,j\}\subseteq[n], i\neq j,$$
where $x_B$ and $X_{BB}$ follows the standard subvector and submatrix notations. However, this relaxation is not tight. A counterexample in dimension $n=3$ was presented by \cite{bach2018efficient}. By modifying the pair-wise positive semidefinite (PSD) constraint to the PSD constraint for all variables:
$$\min_{x,X}\,\,Q\bullet X+c^\top x\quad\mathrm{s.t.}\quad X\leq \mathbf{1}x^\top, \quad 
\bigg(\begin{aligned}
    1 \,& \,\,x^\top\\[-2mm]
    x & \,\,X
\end{aligned}\bigg)\succeq 0,$$
\cite{burer2025semidefinite} proved the tightness of the above SDP relaxation for $n\leq3$. Yet unfortunately, in their recently updated manuscript, a counterexample in dimension $n=4$ is provided. 

Given existing results, we study the approximation properties of a SDP relaxation of the following format: 
\begin{equation}
\label{eq:sdp-strengthen}
\min_{(x,X)\in\mathcal{S}_n}\,Q\bullet X+c^{\top} x\,\,\,\,\,\mathrm{s.t.}\,\,\,\,\,   
\ell_i(x,X)\geq0,\,\,\, i\in[m],
\end{equation}
where 
$$\mathcal{S}_n:=\Big\{(x,X)\in\mathbb{R}^n\times\mathbb{S}^n: \Big(\begin{aligned}
    1 \,& \,x^{\!\top}\\[-2 mm]
    x & \,X
\end{aligned}\Big)\succeq 0\Big\}$$
denote the set of lifted variables satisfying positive semidefinite constraint, and each $\ell_i$ is a general linear inequality given formally by Definition~\ref{definition:Valid-Cuts}. Hence would like to ask the following questions:

\begin{quote}
\centering
    \emph{Given any $n\geq 4$, is there a finite set of linear cuts $\ell_1,\cdots,\ell_m$ s.t. the SDP  relaxation \eqref{eq:sdp-strengthen}\\ is tight for all $(Q,c)\in\mathcal{N}_\infty^n$? If not, how does the relaxation gap scale w.r.t. $n$ and $m$?\,\,\,\,\, }
\end{quote}
To answer these questions, we start by geometrically recasting the universal tightness of an SDP relaxation as an equivalent convex set inclusion problem in the lifted space. Define the feasible region of the SDP relaxation \eqref{eq:sdp-strengthen} as 
$$\mathcal{R}_n = \left\{(x,X)\in\mathcal{S}_n: \ell_j(x,X)\ge0,  \forall  j \in [m]\right\},$$
the convex hull of the set of all rank-one lifted variables as 
$$\mathcal{M}_n:=\mathrm{conv}\left\{(x,xx^\top):x\in[0,1]^n\right\},$$ and a cone
\begin{equation} 
\label{eq:D-n}
\mathcal{D}_n:= \left\{\!(0,H)\in\mathbb{R}^{n}\times\mathbb{S}^n:
\begin{array}{lll} 
    H_{ii} = 0,\quad\,\,\,\,\, \forall ~ i \in [n] \\
    H_{ij} \leq 0, \forall ~ i\neq j\in[n]
\end{array} \!\right\}.
\end{equation}
Then we show the following equivalence: 
$$\mbox{Relaxation }\eqref{eq:sdp-strengthen}\mbox{ is tight on }\mathcal{N}_\infty^n \,\,\,\Longleftrightarrow\,\,\, \mathcal{R}_n\subseteq \mathcal{M}_n+\mathcal{D}_n.$$
This immediately suggests that, given an arbitrary $\mathcal{R}_n$, if there exists  
$(\bar x,\bar X)\in
\mathcal R_n\setminus(\mathcal M_n+\mathcal D_n)$, then the
separating hyperplane between $(\bar x,\bar X)$ and $\mathcal M_n+\mathcal D_n$ will lead to an instance $(\bar Q,\bar c)\in\mathcal{N}_\infty^n$ for which the SDP relaxation \eqref{eq:sdp-strengthen} is not tight.

Based on this observation, we first fix $\mathcal{R}_4$ as the SDP feasible region with McCormick inequalities. Then, given randomly constructed boundary points of $\mathcal{R}_4$, we provide a semi-infinite linear program formulation for finding a submodular linear separator in $(Q,c)$, followed by an alternating optimization scheme for further refinement. Interestingly, the iterates exhibit a highly structured pattern, which leads us to conjecture and eventually identify a simple parameterized family:
\begin{equation}
\label{eq:bad-family}
Q(a,b)=
\begin{pmatrix}
a^2 & -a & 0 & 0 \\
-a & 1 & -1 & 0 \\
0 & -1 & 1 & -b \\
0 & 0 & -b & \frac{1}{4}
\end{pmatrix}, \quad 
c(a,b)=
\begin{pmatrix}
a\\1\\2b-1\\0
\end{pmatrix}.
\end{equation}
All instances with $a,b\in[1/2,1]$ share the same QP optimal value $0$, while the corresponding (McCormick) SDP relaxation share the same optimal value of $\sqrt{3}-7/4\approx -0.018$, together with a simple analytic SDP optimal solution. More importantly, for the instance in this family with $a=b=1/2$, we further obtain a certificate point $(x^{\rm BQP}, X^{\rm BQP})\in\mathcal{S}_4$ with SDP value $-{1}/{100}$, whose off-diagonal projection $(x^{\rm BQP}, (X^{\rm BQP})_{i<j})$ belongs to the 4-dimensional Boolean quadric polytope (BQP), see \cite{padberg1989boolean,de1990cut}. Due to \cite{BurerLetchford2009}, any linear cut without $X_{ii}$ diagonal terms 
\begin{equation}
 \ell(x,X)=c_0+b^\top x+\sum_{1\le i<j\le n}h_{ij}X_{ij},
 \label{eq:BQP-general}
\end{equation}
including triangle, clique, and cycle inequalities, must be valid on the whole BQP, and hence cannot cut off the certificate $(x^{\rm BQP}, X^{\rm BQP})$, indicating an $\Omega(1)$ common lower bound for the relaxation gap of all SDP relaxation \eqref{eq:sdp-strengthen} formulated with only BQP valid cuts \eqref{eq:BQP-general}. 

Next, we move beyond the commonly adopted BQP valid cuts \eqref{eq:BQP-general} and consider SDP relaxations formulated with the most general linear cuts that contains diagonal terms:
\begin{equation}
\label{eq:general-cuts}
\ell(x,X) = c_0 +\sum_{i=1}^n b_i x_i
+\sum_{i=1}^n d_i X_{ii}
+\sum_{1\leq i<j\leq n}h_{ij}X_{ij}.
\end{equation}
Then, in dimension $n = 4$, by properly perturbing the base instance family \eqref{eq:bad-family} and leveraging the multivariate R\'emez inequality \citep{brudnyui1973extremal}, we provide an $\Omega(1/m^2)$ lower bound on the relaxation gap for SDP relaxation with $m$ general form linear cuts \eqref{eq:general-cuts}. 

Finally, by properly padding and duplicating the instance family \eqref{eq:bad-family} and its perturbed variant, we extend the 4-dimensional gap result to general dimension $n\geq 4$. For SDP relaxation \eqref{eq:sdp-strengthen} formulated with only BQP valid cuts \eqref{eq:BQP-general}, regardless of the number of cuts, there is an instance $(Q,c)\in\mathcal{N}_\infty^n$ with an $\Omega(n)$ relaxation gap. For SDP relaxation \eqref{eq:sdp-strengthen} formulated with general linear cuts \eqref{eq:general-cuts}, there exists an instance $(Q,c)\in\mathcal{N}_\infty^n$ with an $\Omega(n/m^2)$ relaxation gap. Both scales linearly with respect to the problem dimension $n$.

\section{Preliminary and related literature}

\subsection{Preliminary concepts and results}
\label{sec:preliminaries}

\noindent\textbf{Continuous submodular functions.}\,\, For \(x,y\in\mathbb{R}^n\), let \(x\vee y\) and \(x\wedge y\) be the
componentwise maximum and minimum, respectively. Then a function \(f:\mathbb{R}^n\to\mathbb{R}\) is submodular if $$f(x)+f(y)\ge f(x\vee y)+f(x\wedge y)$$ for all $x,y\in\mathbb{R}^n$. If \(f\) is twice continuously differentiable, then this definition is equivalent to $\nabla^2_{ij} f(x)\leq0$ for all $i\neq j$. If the full Hessian $\nabla^2 f(x)\leq0$ is entrywise nonpositive, then we say this submodular function satisfies the diminishing return (DR) property and is called DR-submodular. In the quadratic problem~\eqref{eq:boxqp}, its objective is submodular if and only if (iff.) $Q_{ij} \leq 0, ~ \forall i \neq j$, and is DR-submodular iff. $Q \leq 0$ componentwise. For more in-depth discussion of these concepts, please refer to \cite{bach2019submodular,bian2017continuous}, and see \cite{bilmes2022submodularity} for a more recent survey on the applications of submodularity in modern machine learning and artificial intelligence. \vspace{0.2cm}

\noindent\textbf{Valid linear cuts.}\,\, For all the linear constraints that we consider in the SDP relaxation \eqref{eq:sdp-strengthen} in the forms of either \eqref{eq:BQP-general} or \eqref{eq:general-cuts}, we assume they are \emph{valid linear cuts} in the following sense. 
\begin{definition}
\label{definition:Valid-Cuts}
We say a cut $\ell(x,X) \geq 0$ of form \eqref{eq:BQP-general} or \eqref{eq:general-cuts} is a \emph{valid linear cut} in the lifted variable $(x,X)$, if this inequality is valid for all \emph{rank-one lifted} points $(x,X)\in\{(x,xx^\top):x\in[0,1]^n\}$. 
\end{definition} 
Note that many widely used valid linear cuts can be derived from the Boolean quadric polytope
$$\mathcal{B}_n :=\mathrm{conv}\left\{ \bigl(y,(y_iy_j)_{i<j}\bigr):y\in\{0,1\}^n \right\}.$$
By \citet[Proposition~5 and Corollary~1]{BurerLetchford2009}, the off-diagonal projection of \(\mathcal M_n\) onto $(x,(X_{ij})_{i<j})$ is exactly \(\mathcal B_n\), and hence every valid linear cut on $\mathcal{M}_n$ with no diagonal terms $X_{ii}$, in the form of \eqref{eq:BQP-general}, must be valid on $\mathcal{B}_n$, and hence we call them the \emph{BQP valid cuts}.   \vspace{0.2cm}

\noindent\textbf{BQP valid cuts.}\,\, Though a general valid linear cut may contain diagonal terms $X_{ii}$ as in \eqref{eq:general-cuts}, most of the most commonly used cuts are BQP valid cuts \eqref{eq:BQP-general}. A typical example is the McCormick cuts. 
\begin{example}[McCormick inequalities]  Suppose that $(x,X)\in\{(x,xx^\top):x\in[0,1]^n\}$ is a rank-one lifted point, then, for $\forall i\neq j$, it satisfies the  McCormick inequalities:
\begin{equation}  
\label{eq:McCormick}
    X_{ij} \ge0, \quad  
    X_{ij} \ge x_i+x_j-1,\quad X_{ij} \le x_i, \quad 
    X_{ij} \le x_j.  
\end{equation}
\end{example}
We note that the linear cuts used in the SDP relaxations from \cite{bach2018efficient} and \cite{burer2025semidefinite} consist only of the two upper inequalities in \eqref{eq:McCormick}. As the McCormick cuts do not contain diagonal terms, they are obviously BQP valid. Other commonly used BQP-valid cuts include triangle, cycle, and clique inequalities \citep{barahona1986cut,padberg1989boolean,de1990cut,deza1997geometry}. Due to the page limit, we omit their detailed discussion.

\subsection{Related literature} \label{sec:related-literature}


This subsection reviews related results on SDP tightness, valid inequalities, and extended formulations.

For submodular BCQP with $c \leq 0$, the results of \citet{zhang2000quadratic,kim2003exact} imply that the Shor constraint together with proper linear constraints gives an SDP relaxation with the original optimal value. For general BoxQP, \citet{anstreicher2010computable} show that the SDP-RLT relaxation, which combines the Shor constraint with all box-RLT inequalities (including $X_{ii} \leq x_i$), exactly represents $\mathcal{M}_2$, but cannot represent $\mathcal{M}_n$ exactly with $n \geq 3$. \citet{qiu2024exact} then characterizes instances for which RLT or SDP-RLT achieves the original optimal value.  

For submodular BCQP with unrestricted $c$, as mentioned above, \citet{bach2018efficient} provides an $n = 3$ counterexample for a relaxation using pairwise $3 \times 3$ positive semidefinite constraints. Very recently, \citet{burer2025semidefinite} prove that
the SDP relaxation with $X \leq \bm{1} x^{\top}$ attains the original optimal value when $n \leq 3$. They also give an $n=4$ counterexample that maintains a positive relaxation gap after adding all RLT and triangle inequalities. 

Valid inequalities and extended formulations have also been used to strengthen convex relaxations of nonconvex QPs. 
\citet{vries2022tight} give a compact linear extended formulation for all A-odd-cycle inequalities on the graph defined by the nonzero off-diagonal entries of $Q$. More recently, \citet{anstreicher2025extended} propose extended triangle inequalities involving diagonal entries of $X$, together with second-order-cone (SOC) constraints in an extended space. \citet{dey2026second, khajavirad2026tight} give exact SOC and SDP reformulations of BoxQP, respectively, under conditions on the sparsity pattern and the locations of positive diagonal entries of $Q$. 

Our results quantify the limitations of adding valid linear cuts (Definition~\ref{definition:Valid-Cuts}) to the relaxation \eqref{eq:sdp-strengthen}. For $n \geq 4$ and any fixed family of $m$ valid linear cuts in $(x, X)$, there exists an instance $(Q, c) \in \mathcal{N}_\infty^n$ with gap $\Omega(n/(m + 1)^2)$ (Theorem~\ref{thm:Omega(n/m^2)}). When only BQP-valid cuts are imposed, an $\Omega(n)$ SDP relaxation gap remains even if all such cuts are included (Theorem~\ref{thm:Omega(n)-BQP-GP}). In the next section, we start with a four-dimensional construction for BQP-valid cuts.

\section{An $\Omega(1)$ relaxation gap for SDP with BQP valid cuts in dimension $n=4$} 
\label{sec:BQP-nontightness}
\subsection{The basic submodular BCQP construction}
We start by discussing the theoretical properties of the hard instance family \eqref{eq:bad-family}. The numerical methods for constructing the instances, together with the convex set inclusion condition for SDP tightness that induced these methods, are all relegated to Section \ref{sec:n4-counterexample}.

\begin{proposition}
\label{prop:QP-opt=0}
    For any $a,b\in\left[\frac{1}{2},1\right]$, the BCQP instance \eqref{eq:boxqp} with $Q,c$ selected according to \eqref{eq:bad-family} is submodular and has optimal value $0$. 
\end{proposition}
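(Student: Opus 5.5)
Submodularity is immediate: the off-diagonal entries of $Q(a,b)$ in \eqref{eq:bad-family} are $-a,-1,-b$ and zeros, all nonpositive for $a,b\in[\frac12,1]$. Also $x=0$ is feasible with objective value $0$, so the optimal value is at most $0$. The whole task is therefore to prove that
$$f(x)=a^2x_1^2-2ax_1x_2+x_2^2-2x_2x_3+x_3^2-2bx_3x_4+\tfrac14x_4^2+ax_1+x_2+(2b-1)x_3\;\ge\;0\quad\text{on }[0,1]^4.$$
I plan to do this by eliminating the two "end" variables $x_1$ and $x_4$ in closed form, since each appears in only one coupling term. This reduces the claim to a nonnegativity statement for a piecewise quadratic in $(x_2,x_3)\in[0,1]^2$.

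\emph{Eliminating $x_1$.} Put $t=ax_1\in[0,a]$. The $x_1$-terms are $t^2+t(1-2x_2)$, a convex quadratic in $t$ with minimizer $t^*=x_2-\frac12$.
\begin{itemize}
\item If $x_2\le\frac12$, the minimum over $t\ge0$ is $0$, attained at $t=0$.
\item If $x_2>\frac12$, then $t^*\le\frac12\le a$ is feasible, and the minimum is $-(x_2-\frac12)^2$.
\end{itemize}
Hence $\min_{x_1}(\cdot)=-(x_2-\frac12)_+^2$.

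\emph{Eliminating $x_4$.} The $x_4$-terms are $g(x_4)=\frac14x_4^2-2bx_3x_4$, with unconstrained minimizer $4bx_3$.
\begin{itemize}
\item If $x_3\le 1/(4b)$, the minimum is $-4b^2x_3^2$.
\item Otherwise it is attained at $x_4=1$ and equals $\frac14-2bx_3$.
\end{itemize}

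\emph{The reduced function.} After both eliminations,
$$h(x_2,x_3)=(x_2-x_3)^2+x_2+(2b-1)x_3-(x_2-\tfrac12)_+^2+\min_{x_4}g,$$
and I need $h\ge0$ on $[0,1]^2$. This splits into four cases according to $x_2\lessgtr\frac12$ and $x_3\lessgtr 1/(4b)$. In each case $h$ is an explicit quadratic, and I will check nonnegativity by completing squares or by minimizing over $x_2$ first; the dependence on $x_2$ is convex in every case except the one where the $-(x_2-\frac12)^2$ term cancels the $x_2^2$.

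\begin{itemize}
\item \emph{Case $x_2\le\frac12$, $x_3\le 1/(4b)$.} Here $h=(x_2-x_3)^2+x_2+(2b-1)x_3-4b^2x_3^2$. Using $x_3\le 1/(4b)$ gives $4b^2x_3^2\le bx_3$, so $h\ge (x_2-x_3)^2+x_2+(b-1)x_3$. Minimizing over $x_2\ge0$ and using $b\ge\frac12$ should then give nonnegativity.
\item \emph{Case $x_2>\frac12$.} Here $h$ becomes affine in $x_2$: $h=-2x_2x_3+2x_2+x_3^2+(2b-1)x_3-\frac14+\dots$, and the $x_2$-coefficient $2(1-x_3)$ is nonnegative. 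So it suffices to evaluate at $x_2=\frac12$, which reduces to the boundary of the previous cases.
\item \emph{Case $x_3\ge 1/(4b)$.} Here the term $\frac14-2bx_3$ combines with $(2b-1)x_3$ to give $\frac14-x_3$, leaving $(x_2-x_3)^2+x_2-x_3+\frac14-(x_2-\frac12)_+^2$. For $x_2\le\frac12$ this is $(x_2-x_3+\frac12)^2\ge0$. For $x_2>\frac12$ it is handled by the same monotonicity in $x_2$ as above.
\end{itemize}

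I expect the main obstacle to be the corner case $x_2\le\frac12$, $x_3\le1/(4b)$, where the negative term $-4b^2x_3^2$ has to be controlled uniformly in $b\in[\frac12,1]$. If the crude bound $4b^2x_3^2\le bx_3$ turns out not to be sharp enough, I will instead minimize exactly over $x_2\in[0,\frac12]$, with minimizer $x_2=\max(0,x_3-\frac12)$, and check the resulting one-variable quadratic in $x_3$ on $[0,1/(4b)]$ directly.

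Throughout, equality holds along the curve $x_2=x_3-\frac12$, $x_1=x_4$-optimal, and so on, e.g. at $x=0$. This is consistent with the optimal value being exactly $0$ for every $a,b\in[\frac12,1]$, as the statement claims.
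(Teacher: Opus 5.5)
Your proof is correct, but only via the fallback you mention. The primary bound in the corner case $x_2\le\frac12$, $x_3\le\frac{1}{4b}$ genuinely fails.

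After using $4b^2x_3^2\le bx_3$, you must minimize $(x_2-x_3)^2+x_2+(b-1)x_3$ over $x_2\ge 0$. The minimum is at $x_2=0$ (since $x_3-\frac12\le 0$) and equals $x_3\bigl(x_3-(1-b)\bigr)$. This is negative for $0<x_3<1-b$, so it fails for every $b\in[\frac12,1)$. For example, $b=\frac12$, $x_3=\frac14$ gives $-\frac{1}{16}$, although the true value $h(0,\frac14)$ is $0$. So the exact minimization is mandatory, not optional.

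The exact minimization does work. The minimizer over $x_2\in[0,\frac12]$ is $x_2=0$, because $x_3\le\frac{1}{4b}\le\frac12$, and
\[
h(0,x_3)=(1-4b^2)x_3^2+(2b-1)x_3=(2b-1)\,x_3\bigl(1-(1+2b)x_3\bigr)\ge 0,
\]
since $(1+2b)x_3\le\frac{1+2b}{4b}\le 1$ when $b\ge\frac12$. Your other steps also check out:
\begin{itemize}
\item the monotonicity in $x_2$ for $x_2>\frac12$ (coefficient $2(1-x_3)\ge0$);
\item the perfect square $(x_2-x_3+\frac12)^2$ for $x_3\ge\frac{1}{4b}$.
\end{itemize}
Hence $f\ge h\ge 0$ on the box. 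Your closing sentence about the equality set is garbled, but it is irrelevant, since $f(0)=0$ is all you need.

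The paper takes a different route and eliminates no variables. It splits only on $x_3\le\frac12$ versus $x_3>\frac12$, and writes $f$ in the two regions respectively as
\[
f=(ax_1-x_2)^2+\Bigl(x_3-\frac{x_4}{2}\Bigr)^2+ax_1+x_2(1-2x_3)+(2b-1)x_3(1-x_4),
\]
\[
f=\Bigl(ax_1-x_2+x_3-\frac12\Bigr)^2+2ax_1(1-x_3)+\frac{(1-x_4)(4x_3-x_4-1)}{4}+(2b-1)x_3(1-x_4).
\]
Every term is visibly nonnegative on the box in the corresponding region. This certificate is checked by expansion in a few lines and uses only $a\ge 0$, $b\ge\frac12$, but it has to be guessed.

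Your route instead exploits the path structure $x_1-x_2-x_3-x_4$ of $Q$. The two leaf variables each enter through a single coupling, so they can be minimized in closed form, leaving a piecewise quadratic in $(x_2,x_3)$. It is longer and needs more case bookkeeping, but it is constructive rather than guessed. It also exposes the zero set directly, e.g.\ $x=(0,s-\frac12,s,1)$ for $s\in[\frac12,1]$, and for $b=\frac12$ the segment $(0,0,t,2t)$ noted in the paper.
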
 
\begin{proof}
First of all, depending on the value of $x_3$, the QP objective function allows a natural decomposition into nonnegative sums. Denote $f(x)=x^\top Qx+c^\top x$. In case $x_3\in[0,1/2]$, we have
$$f(x)=(ax_1-x_2)^2+\left(x_3-\frac{x_4}{2}\right)^2+ax_1+x_2(1-2x_3)+(2b-1)x_3(1-x_4)\geq0$$
because each term is nonnegative. In case $x_3\in(1/2,1]$, we can rewrite the quadratic objective function in the following nonnegative sums: 
$$
f(x)=\left(ax_1-x_2+x_3-\frac12\right)^2 + 2ax_1(1-x_3)  +\frac{(1-x_4)(4x_3-x_4-1)}{4}  +(2b-1)x_3(1-x_4)\geq0.$$
Hence $f(x)\geq0, \forall x\in[0,1]^4$. On the other hand, we have $f(0)=0$, which completes the proof. 
\end{proof} 

We should note that although $x=0$ is a global optimal solution for this family of instances, it may not be the unique solution. For example, take $b=1/2$, the $f(x)\equiv0$ on the line segment $x^*(t) = (0,0,t,2t)$ with $t\in[0,1/2]$. 

\subsection{An exact gap for the McCormick baseline}
In the next proposition, let us show that the SDP relaxation \eqref{eq:sdp-strengthen} with McCormick cuts \eqref{eq:McCormick} admits an exact relaxation gap of $\sqrt{3}-7/4 < 0$.   

\begin{proposition}
\label{prop:SDP-opt<0}
For any $a,b\in\left[\frac{1}{2},1\right]$, the SDP relaxation \ref{eq:sdp-strengthen} with McCormick cuts allows an exact optimal solution $(x^*(a),X^*(a))$, independent of $b$, given by    
\[
x^*(a)=
\begin{pmatrix}
\frac{\gamma}{3a}\\[1mm]
\frac{1}{3}\\[1mm]
\frac{2}{3}\\[1mm]
1-\frac{2\gamma}{3}
\end{pmatrix},\,\,
X^*(a)=
\begin{pmatrix}
\frac{\gamma}{6a^2}&\frac{\gamma}{3a}&\frac{\gamma}{3a}&
\frac{1-3\gamma}{3a}\\[1mm]
\frac{\gamma}{3a}&\frac{1-\gamma}{3}&\frac{1}{3}&\frac{1}{3}\\[1mm]
\frac{\gamma}{3a}&\frac{1}{3}&\frac{2-\gamma}{3}&\frac{2}{3}\\[1mm]
\frac{1-3\gamma}{3a}&\frac{1}{3}&\frac{2}{3}&1-\frac{2\gamma}{3}
\end{pmatrix}
\]
where $\gamma := 2-\sqrt{3}$ is used to shorten the formula. For $\forall a,b\in\left[\frac{1}{2},1\right]$, the optimal value of the SDP relaxation is $\sqrt{3}-\frac{7}{4}<0$.
\end{proposition}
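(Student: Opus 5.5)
The proof is primal--dual. First I show that $(x^*(a),X^*(a))$ is feasible for the McCormick SDP and compute its objective value. Then I exhibit a dual certificate proving that no feasible point does better.

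\textbf{Primal feasibility and value.} The McCormick inequalities \eqref{eq:McCormick} are checked entry by entry. With $\gamma=2-\sqrt3\approx0.268$ and $a\in[1/2,1]$, every off-diagonal entry is bounded by the smaller of the two corresponding $x$-entries and is at least $\max\{0,x_i+x_j-1\}$. Two examples:
\begin{itemize}
\item $X_{14}=(1-3\gamma)/(3a)\ge0$ because $3\gamma<1$.
\item $X_{24}=1/3\ge x_2+x_4-1=1/3-2\gamma/3$.
\end{itemize}
For the PSD constraint, I would write the $5\times5$ moment matrix $Y=\begin{pmatrix}1&x^{*\top}\\ x^*&X^*\end{pmatrix}$ as a Gram matrix. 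Equivalently, I would show that $X^*-x^*x^{*\top}\succeq0$, which is a $4\times4$ matrix. The rescaling $x_1\mapsto ax_1$ removes the dependence on $a$ throughout, since row and column $1$ of $X^*$ scale by $1/a$. So it suffices to treat $a=1$. For $a=1$ I expect $Y$ to have rank $2$ or $3$, and I would confirm this by an explicit factorization $Y=VV^\top$. The objective $Q\bullet X^*+c^\top x^*$ is then a direct computation. The terms involving $b$ are
\[
-2bX_{34}+(2b-1)x_3 = -\tfrac43 b+\tfrac43 b-\tfrac23,
\]
so $b$ cancels. The terms involving $a$ also reduce to constants after the rescaling, and the total should simplify to $\sqrt3-7/4$.

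\textbf{Dual certificate.} I would write the conic dual of the McCormick SDP. For each active McCormick inequality I introduce a multiplier $\lambda\ge0$, and I take the dual slack to be a matrix $S\succeq0$ such that
\[
\begin{pmatrix}-(\sqrt3-7/4)&c^\top/2\\ c/2&Q\end{pmatrix}-\sum_k\lambda_k A_k=S,
\]
where $A_k$ are the symmetric matrix representations of the McCormick cuts. Complementary slackness tells me which $\lambda_k$ may be nonzero. Reading off the active cuts at $x^*$, these are:
\begin{itemize}
\item $X_{12}=x_1$ and $X_{13}=x_1$;
\item $X_{23}=x_2$ and $X_{24}=x_2$;
\item $X_{34}=x_3$.
\end{itemize}
It also forces $SY=0$, so $S$ must annihilate the range of $Y$. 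Imposing $SY=0$ gives linear equations in the multipliers. Solving them in closed form in $a,b$, presumably with a surd coming from $\sqrt3$, and then checking $\lambda\ge0$ and $S\succeq0$ finishes the proof. Weak duality then gives that the optimal value is at least $\sqrt3-7/4$, and primal feasibility shows this value is attained. It is negative because $\sqrt3<7/4$.

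\textbf{Main obstacle.} The hard part is constructing the dual certificate uniformly in $(a,b)$. I would first use the $a$-rescaling to reduce to $a=1$. For $b$, I would try to absorb the $b$-dependent part of $Q$ and $c$, namely $b\,(-2X_{34}+2x_3)$, entirely into the multiplier of the active cut $X_{34}\le x_3$. That choice makes $S$ independent of $b$. I would then find $S$ by taking it orthogonal to the range of $Y$ at $a=1$ and verify positive semidefiniteness through its nonzero eigenvalues or a Schur complement.
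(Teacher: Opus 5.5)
Your plan is correct and is the paper's own proof. Both establish primal feasibility through the congruence $Y^*(a)=D(a)Y^*(\tfrac12)D(a)$, check the McCormick cuts entrywise, and evaluate the objective directly; both then close with a weak-duality certificate supported exactly on your five active upper McCormick cuts, with a $b$-independent PSD slack equal to the paper's $\Lambda^*(a)=\tfrac14 D(a)^{-1}A^\top A\,D(a)^{-1}$, whose range lies in the kernel of $Y^*(a)$. The paper's explicit multipliers $\lambda_{12}=a\gamma$, $\lambda_{13}=a$, $\lambda_{23}=\gamma$, $\lambda_{24}=\tfrac12$, $\lambda_{34}=2b-\tfrac{\sqrt3}{2}$ confirm both your $a$-rescaling and your idea of absorbing the $b$-dependence into the multiplier of $X_{34}\le x_3$.
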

\begin{proof} 
Denote the lifted matrix of $x^*(a)$ and $X^*(a)$ as
$$Y^*(a):=\left(\begin{aligned}
    \,1\quad & \,x^*(a)^\top\\[-1mm]
    x^*(a)\,\, & X^*(a)
\end{aligned}\right).$$ Then it is straightforward to verify that $Y^*(\frac{1}{2})\succeq0$. Let $D(a):=\mathrm{Diag}(1,1/2a,1,1,1)$, then 
$$Y^*(a) = D(a)Y^*\left(\frac{1}{2}\right)D(a)\succeq0$$
and thus the positive semidefinite constraint \eqref{eq:sdp-strengthen} is satisfied. Then by direct computation, we can verify that for all $a\in[1/2,1]$, this point satisfies all the McCormick inequalities, and is hence a feasible solution. Finally, substituting the $x^*(a),X^*(a)$ to SDP objective function and we will observe that both $a$ and $b$ are canceled out: 
\begin{equation} 
Q(a,b)\bullet X^*(a) + c(a,b)^\top x^*(a)\equiv\sqrt{3}-\frac{7}{4}.\nonumber
\end{equation} 
It remains to lower the optimal value by $\sqrt{3}-\frac{7}{4}$ through weak duality. Let us fix any $a,b\in\left[\frac{1}{2},1\right]$. For $(x,X)\in\mathbb{R}^n\times\mathbb{S}^n$, denote $Y$ its lifted point, and define the Lagrangian function
$\mathcal{L}(x,X,\lambda,\Lambda)$ by
$$\mathcal{L}(x,X,\lambda,\Lambda)\,=\, Q(a,b)\bullet X + c(a,b)^\top x  - Y\bullet\Lambda - \sum_i \lambda_i\ell_i(x,X)$$
where $\ell_i$ corresponds to the constraints in McCormick inequalities \eqref{eq:McCormick}. Then, by weak duality:
$$V^*_{\rm Mc}
\geq  \min_{{x\in\mathbb{R}^n},{X\in\mathbb{S}^n}} \mathcal{L}(x,X,\lambda,\Lambda),\,\,\,\forall {\lambda\geq0},{\Lambda\succeq0},$$
where $V^*_{\rm Mc}$ stands for the optimal value of the McCormick SDP relaxation. Now construct such a pair of feasible dual variables. Keep $\gamma = 2-\sqrt{3}$, we set 
\[
A=\begin{pmatrix}
-\gamma & 1 & \gamma-2 & 1 & 0 \\[1mm]
0 & 0 & 1 & \gamma-2 & 1
\end{pmatrix},
\]
and 
$$\Lambda^*(a) = \frac{1}{4}\left[D(a)\right]^{-1}A^\top A\left[D(a)\right]^{-1}\succeq0$$ is positive semidefinite. Then set $$\lambda^*_{12}=a(2-\sqrt{3}),\quad  \lambda^*_{13}=a,\quad  
\lambda^*_{23}=2-\sqrt{3},$$
$$\lambda^*_{24}=\frac{1}{2}, \qquad \lambda^*_{34}=2b-\frac{\sqrt{3}}{2},$$
and set all the other $\lambda$-multipliers to 0, then $\lambda\geq0$. Through direct computation, all the terms involving $a, b$ and $x, X$ can be canceled out, and the following identity always holds as long as $X\in\mathbb{S}^n$:
\begin{align*}
\mathcal{L}(x,X&, \lambda^*,\Lambda^*) =  Q(a,b)\bullet X + c(a,b)^\top x - Y \bullet \Lambda^*(a)- \lambda^*_{12}(x_1-X_{12}) \\
& -\lambda^*_{13}(x_1-X_{13})-\lambda^*_{23}(x_2-X_{23}) -\lambda^*_{24}(x_2-X_{24}) - \lambda^*_{34}(x_3-X_{34}) \equiv \sqrt{3}-\frac{7}{4}.
\end{align*} 
Together with the above weak duality argument, we complete the proof.  
\end{proof}
\begin{corollary}
    By incorporating the McCormick inequalities, it is not possible to close the relaxation gap between the submodular BCQP \eqref{eq:boxqp} and the SDP relaxation \eqref{eq:sdp-strengthen} in dimension $n=4$. 
\end{corollary}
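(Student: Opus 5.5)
The corollary follows by combining Proposition~\ref{prop:QP-opt=0} and Proposition~\ref{prop:SDP-opt<0} on a single instance. I would fix any $a,b\in[\frac12,1]$, say $a=b=\frac12$, and take $(Q,c)=(Q(a,b),c(a,b))$ from \eqref{eq:bad-family}. The first step is to confirm that this instance is admissible. Its off-diagonal entries are $-a,0,0,-1,0,-b\le 0$, so it is submodular. Its entries all lie in $[-1,1]$, and $c=(a,1,2b-1,0)^\top$ also has entries in $[-1,1]$. Hence $(Q,c)\in\mathcal{N}_\infty^4$, so the instance is a legitimate normalized submodular BCQP in dimension $n=4$.

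Next I would invoke Proposition~\ref{prop:QP-opt=0}, which gives optimal value $0$ for the BCQP \eqref{eq:boxqp}. Proposition~\ref{prop:SDP-opt<0} gives optimal value $\sqrt3-\frac74\approx-0.018<0$ for the SDP relaxation \eqref{eq:sdp-strengthen} with the McCormick cuts \eqref{eq:McCormick}, attained at the feasible point $(x^*(a),X^*(a))$. The gap is therefore exactly $\frac74-\sqrt3>0$, so adding all McCormick inequalities to the Shor relaxation does not make it tight.

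Two remarks complete the corollary as stated. First, if the reading is "McCormick together with possibly other cuts," the statement still covers only McCormick itself, and that is what has been shown. Second, since the relaxations in \cite{bach2018efficient,burer2025semidefinite} use only a subset of these cuts, their feasible regions contain that of the McCormick relaxation, so their gaps on this instance are at least as large; this monotonicity is immediate. There is no real obstacle here. The only step needing care is the membership check $(Q,c)\in\mathcal{N}_\infty^4$, which ensures the counterexample lies in the normalized class.
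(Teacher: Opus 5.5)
Your proof is correct and matches the paper, which states this corollary without a separate proof as an immediate consequence of Proposition~\ref{prop:QP-opt=0} (BCQP value $0$) and Proposition~\ref{prop:SDP-opt<0} (McCormick SDP value $\sqrt{3}-\frac{7}{4}<0$) on the family \eqref{eq:bad-family}; your membership check $(Q,c)\in\mathcal{N}_\infty^4$ is also right. One minor caveat, affecting only your side remark: $X\le\mathbf{1}x^\top$ also imposes the diagonal cuts $X_{ii}\le x_i$, which are not McCormick inequalities, so the relaxations of \cite{bach2018efficient,burer2025semidefinite} are not literally supersets of the McCormick one, but the conclusion still holds because $(x^*(a),X^*(a))$ satisfies $X_{ii}\le x_i$ for $a\ge\frac{1}{2}$.
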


\subsection{The $\Omega(1)$ gap for all SDP with sheer BQP valid cuts}
In Proposition \ref{prop:SDP-opt<0}, with only McCormick inequalities, the SDP relaxation allows a structured analytic form optimal solution. However, such a solution can potentially be cut off by additional cuts. For example, when $a=b=1/2$, the solution $x^*(a),X^*(a)$ can be cut off by the triangle inequality. Therefore, we would like to systematically assess the approximation quality of all BQP valid cuts, which is sufficient to consider the following SDP in dimension $n=4$:
\begin{align}
    &\mathop{\rm minimize}_{(x,X)\in\mathcal{S}_4, y\in\Delta_{16}} \,\,\,  Q\bullet X + c^\top x  \nonumber\\
    &\qquad\mathrm{s.t.} \qquad\,\,\, (x,(X_{ij})_{i<j}) = y_1P_1+\cdots y_{16}P_{16}, \label{eq:SDP-BQP-certificate} 
\end{align}
where $\Delta_{16}:=\left\{y\in\mathbb{R}^{16}: \sum_iy_i=1, y\geq0\right\}$ denotes the probability simplex, and $P_1,\cdots,P_{16}$ denote the sixteen vertices of the Boolean quadric polytope $\mathcal{B}_4$. Select $Q$ and $c$ from the instance family \eqref{eq:bad-family} with $a=b=1/2$, then a direct computation gives an optimal value $v^*\approx -0.0108<0$, whereas the optimal value of the QP problem equals $0$, see Proposition \ref{prop:QP-opt=0}. As the optimal solution to \eqref{eq:SDP-BQP-certificate} belongs to $\mathcal{B}_4$, it is automatically feasible to all other BQP valid cuts of form \eqref{eq:BQP-general}. Nevertheless, for mathematical rigor, the next proposition provides an exact rational certificate which is exactly feasible to \eqref{eq:SDP-BQP-certificate} and has strictly negative objective value.

\begin{proposition}
\label{prop:BQP-certificate}
Let $(Q, c)$ be selected from the parameterized family \eqref{eq:bad-family} with $a=b=1/2$. Then \eqref{eq:SDP-BQP-certificate} has a feasible solution $x^{\rm BQP}, X^{\rm BQP}, y^{\rm BQP}$ with objective value $-1/100$.
\end{proposition}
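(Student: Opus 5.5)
This is a certificate-type statement, so the proof will consist of exhibiting explicit rational data and checking three things: the PSD condition, BQP membership, and the objective value. I would start from the numerical optimizer of \eqref{eq:SDP-BQP-certificate} at $a=b=1/2$, whose value is about $-0.0108$. Rounding it to rational numbers leaves a slack of roughly $0.0008$ against the target $-1/100$, and that slack is what lets the rounded point stay feasible.

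Concretely, the rounding goes as follows:
\begin{itemize}
\item Fix $Q=Q(\tfrac12,\tfrac12)$ and $c=c(\tfrac12,\tfrac12)$ from \eqref{eq:bad-family}.
\item Choose a rational probability vector $y^{\rm BQP}\in\Delta_{16}$. I would put zero weight on vertices that are clearly irrelevant, so that only a few of the sixteen vertices $P_k=(v,(v_iv_j)_{i<j})$, $v\in\{0,1\}^4$, are used.
\item Define $x^{\rm BQP}$ and the off-diagonal entries $X^{\rm BQP}_{ij}$ ($i<j$) exactly as $\sum_k y_kP_k$. The linear equality constraint in \eqref{eq:SDP-BQP-certificate} then holds by construction, and nonnegativity and the simplex constraint on $y$ are checked by inspection.
\item The only remaining free variables are the diagonal entries $X^{\rm BQP}_{ii}$. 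They are not constrained by the BQP equality and enter the objective through $Q_{ii}$, where every $Q_{ii}>0$. So I would take each $X_{ii}$ as small as PSD allows, rounded up slightly to a convenient rational number. This rounding creates a strictly positive definite margin.
\end{itemize}

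With the point fixed, the objective $Q\bullet X+c^\top x$ is a finite rational sum, and I would tune the rational choices so that it equals exactly $-1/100$. Any leftover slack would be absorbed by slightly increasing one diagonal entry.

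The step I expect to be the main obstacle is certifying $Y=\begin{pmatrix}1&x^\top\\ x&X\end{pmatrix}\succeq0$ in exact arithmetic, since the numerical optimum lies on the PSD boundary. My plan is to give an explicit $LDL^\top$ or Cholesky factorization of the $5\times5$ matrix $Y$ with rational entries and check that every pivot is nonnegative; equivalently, one can check that all leading principal minors are positive, which proves $Y\succ0$.

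If exact rational pivots turn out to be messy, I would instead write $Y=\sum_r u_ru_r^\top+E$ with rational vectors $u_r$ and a diagonally dominant remainder $E$. Diagonal dominance of $E$ follows from Gershgorin, and that suffices for PSD. The margin obtained from raising the diagonal entries (rather than rounding down) is exactly what makes this work.

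Once all three checks pass, feasibility of $(x^{\rm BQP},X^{\rm BQP},y^{\rm BQP})$ for \eqref{eq:SDP-BQP-certificate} is established, and the objective value is $-1/100$.
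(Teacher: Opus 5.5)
Your strategy matches the paper's: Appendix~\ref{app:BQP-certificate} exhibits a rational point supported on a few vertices of $\mathcal{B}_4$, with the diagonal of $X$ chosen by hand, and checks the simplex constraint, the moment identity, $Y\succeq0$ and the objective. \textbf{The gap is that your proposal never produces the point, and for a statement of this kind the data \emph{are} the proof.} Your existence argument rests on the floating-point value $v^*\approx-0.0108$. That is exactly the claim the proposition is meant to replace with an exact certificate. ``Round, then tune the rational choices'' describes how to search for a certificate; it does not show that one exists. If you had a rigorous bound $v^*<-1/100$, your recipe would finish the job: raising a diagonal entry preserves $Y\succeq0$, and since every $Q_{ii}>0$ it moves the value continuously up to $-1/100$. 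But that bound is the whole content of the proposition.

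The paper supplies the missing data. It uses the vertices $z^1=(0,0,0,0)$, $z^2=(0,0,0,1)$, $z^3=(0,0,1,1)$, $z^4=(0,1,1,1)$, $z^5=(1,0,1,0)$, $z^6=(1,1,1,1)$ with weights $(9/50,1/6,169/600,1/6,1/40,9/50)$. These give $x^{\rm BQP}=(41/200,26/75,49/75,159/200)$ and the off-diagonal entries of $X^{\rm BQP}$. The diagonal is set to $(19/125,\,32/125,\,563/1000,\,371/500)$. With these numbers your planned exact PSD check goes through cleanly:
\[
360000\,\bigl(X^{\rm BQP}-x^{\rm BQP}(x^{\rm BQP})^\top\bigr)=\begin{pmatrix}39591&39216&25584&6129\\39216&48896&43264&25584\\25584&43264&49016&39216\\6129&25584&39216&39591\end{pmatrix}.
\]
Its leading principal minors $39591$, $397946880$, $209660313600$, $74472680448000$ are all positive, so $Y\succ0$ by the Schur complement. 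The objective is $Q\bullet X^{\rm BQP}+c^\top x^{\rm BQP}=-2755/6000+2695/6000=-1/100$ exactly. The paper only asserts $(x^{\rm BQP},X^{\rm BQP})\in\mathcal{S}_4$ ``by direct computation,'' so the explicit minor or $LDL^\top$ verification you describe is a useful addition once the data are in hand.
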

The values of $x^{\rm BQP}, X^{\rm BQP}, y^{\rm BQP}$ and their feasibility verification are attached in the Appendix~\ref{app:BQP-certificate}. In fact, this certificate also satisfies the general form constraint $X_{ii}\leq x_i, i\in[n].$ And hence is not affected by these easily obtainable cuts of form \eqref{eq:general-cuts}.

\begin{theorem}
\label{thm:BQP-n4} 
In dimension $n=4$, if an SDP relaxation \eqref{eq:sdp-strengthen} is formulated only with cuts of form \eqref{eq:BQP-general}, then there exists an instance $(Q,c)\in\mathcal{N}_\infty^4$ with relaxation gap at least $1/100$.
\end{theorem}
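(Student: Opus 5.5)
The plan is to take the instance $(Q,c)=(Q(\tfrac12,\tfrac12),c(\tfrac12,\tfrac12))$ from the family \eqref{eq:bad-family} and exhibit the certificate of Proposition~\ref{prop:BQP-certificate} as a feasible point of any such relaxation.

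First, we check that this instance lies in $\mathcal{N}_\infty^4$. With $a=b=\tfrac12$ the entries of $Q$ are $\tfrac14,-\tfrac12,0,1,-1,\tfrac14$, so $\|Q\|_\infty\le 1$. All off-diagonal entries are nonpositive. Also $c=(\tfrac12,1,0,0)^\top$, so $\|c\|_\infty\le1$. By Proposition~\ref{prop:QP-opt=0}, the BCQP optimal value of this instance is exactly $0$.

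Next, fix any SDP relaxation \eqref{eq:sdp-strengthen} whose cuts $\ell_1,\dots,\ell_m$ all have the form \eqref{eq:BQP-general} and are valid in the sense of Definition~\ref{definition:Valid-Cuts}. Take the point $(x^{\rm BQP},X^{\rm BQP})$ from Proposition~\ref{prop:BQP-certificate}; it lies in $\mathcal{S}_4$.
\begin{itemize}
\item Each $\ell_j$ depends only on $(x,(X_{ij})_{i<j})$, so it is an affine function on the off-diagonal projection space.
\item By validity, $\ell_j\ge0$ at every rank-one lifted point. In particular this holds at the points with $x\in\{0,1\}^4$, whose projections are exactly the vertices $P_1,\dots,P_{16}$ of $\mathcal{B}_4$. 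Hence $\ell_j(P_k)\ge 0$ for all $k$.
\item The feasibility constraint in \eqref{eq:SDP-BQP-certificate} gives $(x^{\rm BQP},(X^{\rm BQP}_{ij})_{i<j})=\sum_k y^{\rm BQP}_k P_k$ with $y^{\rm BQP}\in\Delta_{16}$. Since $\ell_j$ is affine, $\ell_j(x^{\rm BQP},X^{\rm BQP})=\sum_k y^{\rm BQP}_k\,\ell_j(P_k)\ge0$.
\end{itemize}
Note that this argument needs only the vertex values, so we do not have to invoke the Burer--Letchford projection result, although it gives the same conclusion.

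Thus $(x^{\rm BQP},X^{\rm BQP})\in\mathcal{R}_4$. Its objective value is $-1/100$, so the SDP optimal value is at most $-1/100$. The QP value is $0$, so the relaxation gap is at least $1/100$.

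The only real work is verifying the exact rational certificate: positive semidefiniteness of the lifted $5\times5$ matrix, the convex-combination identity, and the objective value $-1/100$. This is deferred to Appendix~\ref{app:BQP-certificate}. For semidefiniteness, I would use an exact $LDL^\top$ factorization or nonnegative leading principal minors computed in rational arithmetic.
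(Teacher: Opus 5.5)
Your proposal is correct and takes the paper's route: the instance $(Q(\tfrac12,\tfrac12),c(\tfrac12,\tfrac12))\in\mathcal{N}_\infty^4$ has QP value $0$ by Proposition~\ref{prop:QP-opt=0}, and the certificate of Proposition~\ref{prop:BQP-certificate} has objective value $-1/100$ with off-diagonal projection in $\mathcal{B}_4$, so it satisfies every valid cut of form \eqref{eq:BQP-general}. You are also right that only validity at the $16$ binary points plus affineness is needed, so the nontrivial direction of the Burer--Letchford projection result is never used, and the rational certificate does check out.
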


\section{An $\Omega(1/m^2)$ relaxation gap for SDP with $m$ general cuts in dimension $n=4$}
\label{sec:finite-cuts}

In Section \ref{sec:BQP-nontightness}, we have constructed a family of instances $\big(Q(a,b), c(a,b)\big)$ such that an exact relaxation gap of $\sqrt{3}-7/4\approx-0.018$ exists for SDP relaxation with  McCormick inequalities. Then in Proposition \ref{prop:BQP-certificate}, we have constructed an exact certificate point in $\mathcal{B}_4$ with a gap of $-1/100$, disproving the tightness of any set of BQP valid cuts. However, for any pair of $({Q},{c})$ from the parametrized family \eqref{eq:bad-family} with $a,b\in[1/2,1]$, adding a single (optimal) objective value cut 
$${Q}\bullet X + {c}^\top x\geq  0$$
to \eqref{eq:sdp-strengthen} is sufficient to close the relaxation gap for this family of hard instances. Therefore, it remains to study whether more general cuts of form \eqref{eq:general-cuts} can retain the tightness of SDP. Consider a slightly perturbed variant of the parameterized family \eqref{eq:bad-family}, in which a strongly convex term is added to introduce solution uniqueness to the QP problem. For the ease of presentation, we restrict ourselves to the special choice $a=b=1/2$, and construct the perturbed instance
\begin{equation}
\label{eq:bad-family-perturbed}
Q^\eta:=Q\left(\frac{1}{2},\frac{1}{2}\right) + \eta\cdot E_{44},\quad c^\eta = c\left(\frac{1}{2},\frac{1}{2}\right)-\eta\cdot e_4,
\end{equation} 
where $E_{44}$ stands for the unit matrix in $\mathbb{R}^{4\times 4}$ with only the (4,4)-th entry equal to 1 and all other entries equal to 0, and $e_4$ stands the unit vector with only the 4-th entry equal to 1 and all other entries equal to 0. This correspond to adding the term $\eta(x_4-1/2)^2-\eta/4$ to the objective function of the unperturbed QP instance. For this instance, the following proposition holds. 

\begin{proposition}
\label{prop:QP-opt=-eta/4}
For any $\eta>0$, the BCQP instance \eqref{eq:boxqp} with $Q,c$ selected according to \eqref{eq:bad-family-perturbed} is submodular. It has optimal value $-\eta/4$, and the optimal solution $x^*=(0,0,1/4,1/2)$ is unique. 
\end{proposition}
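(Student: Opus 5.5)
The plan is to reduce everything to the nonnegative decompositions already obtained in the proof of Proposition~\ref{prop:QP-opt=0}. Write $f(x)=x^\top Q(\tfrac12,\tfrac12)x+c(\tfrac12,\tfrac12)^\top x$ and $f^\eta(x)=x^\top Q^\eta x+(c^\eta)^\top x$.

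\emph{Submodularity and the lower bound.} The perturbation $\eta E_{44}$ only changes a diagonal entry, so the off-diagonal entries of $Q^\eta$ equal those of $Q(\tfrac12,\tfrac12)$, which are nonpositive. Hence the instance is submodular. As noted right after \eqref{eq:bad-family-perturbed},
\[
f^\eta(x)=f(x)+\eta\left(x_4-\tfrac12\right)^2-\tfrac{\eta}{4}.
\]
Proposition~\ref{prop:QP-opt=0} gives $f\ge 0$ on $[0,1]^4$, so $f^\eta\ge -\eta/4$ there. Equality holds if and only if $f(x)=0$ and $x_4=\tfrac12$.

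\emph{Characterizing the minimizers.} It remains to find all $x\in[0,1]^4$ with $x_4=\tfrac12$ and $f(x)=0$. With $a=b=\tfrac12$, the terms carrying the factor $2b-1$ vanish in both decompositions. I will split into the same two cases as in Proposition~\ref{prop:QP-opt=0}.

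\textbf{Case $x_3\in[0,\tfrac12]$.} Here
\[
f=(\tfrac12x_1-x_2)^2+(x_3-\tfrac{x_4}{2})^2+\tfrac12x_1+x_2(1-2x_3),
\]
and every term is nonnegative. So $f=0$ forces each term to vanish:
\begin{itemize}[nosep]
\item $\tfrac12x_1=0$ gives $x_1=0$;
\item the first square then reduces to $x_2^2$, forcing $x_2=0$;
\item the second square forces $x_3=x_4/2=\tfrac14$.
\end{itemize}
This yields exactly $x^*=(0,0,\tfrac14,\tfrac12)$. Substituting back confirms $f(x^*)=0$ and hence $f^\eta(x^*)=-\eta/4$.

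\textbf{Case $x_3\in(\tfrac12,1]$.} Here
\[
f=(\tfrac12x_1-x_2+x_3-\tfrac12)^2+x_1(1-x_3)+\tfrac14(1-x_4)(4x_3-x_4-1).
\]
At $x_4=\tfrac12$ the last term equals $\tfrac18(4x_3-\tfrac32)$. Since $x_3>\tfrac12$, this is strictly positive, while the other two terms are nonnegative. Thus $f>0$, and no minimizer lies in this region.

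\emph{Expected obstacle.} The only delicate point is uniqueness, which rests entirely on checking that the $x_3>\tfrac12$ decomposition stays strictly positive once $x_4$ is pinned to $\tfrac12$. The sign computation above handles it, so I expect no genuine difficulty. Combining the two cases, the optimal value is $-\eta/4$, attained uniquely at $x^*=(0,0,\tfrac14,\tfrac12)$.
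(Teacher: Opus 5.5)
Your proof is correct and follows the paper's argument essentially step for step. The identity $f_\eta = f_0 + \eta(x_4-\tfrac12)^2 - \eta/4$ pins $x_4=\tfrac12$, and the two-case decomposition from Proposition~\ref{prop:QP-opt=0} then excludes $x_3>\tfrac12$ and forces $x=(0,0,\tfrac14,\tfrac12)$ when $x_3\le\tfrac12$; the only difference is that you make the trivial submodularity check explicit, which the paper leaves implicit.
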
 
\begin{proof}
Denote $f_\eta(x)=x^\top Q^\eta x+x^\top c^\eta$ the objective function of BCQP. Then we have $$f_\eta(x) = f_0(x) + \eta(x_4-1/2)^2-\eta/4$$
where $f_0(x)$ corresponds to the original QP instance discussed in Proposition \ref{prop:QP-opt=0}, with no perturbation. Consequently, we have $f_0(x)\geq0$ for $\forall x\in[0,1]^4$, and hence $f_\eta(x)\geq-{\eta}/4$ for $\forall x\in[0,1]^4$. On the other hand, $f_\eta(x^*) = -\eta/4$, proving the optimality of the point $x^*$ stated in the proposition. The key is to show the uniqueness of the solution. As the $f_0$ part is nonnegative, to achieve the minimum, we must have $x_4^* = 1/2$. Then in case $x_3\in(1/2,1]$ and with $x_4^* = 1/2$, we have 
$$
f_\eta(x)=\left(\frac{x_1}{2}-x_2+x_3-\frac12\right)^2 + x_1(1-x_3)  + \frac{4x_3-3/2}{8} - \frac{\eta}{4}>\frac{1}{16}-\frac{\eta}{4}.$$
Hence the optimality can only be achieved in case $x_3\in[0,1/2]$. In this case, with $x_4^* = 1/2$, we have
$$f_\eta(x)=\left(\frac{x_1}{2}-x_2\right)^2+\left(x_3-\frac{1}{4}\right)^2+\frac{x_1}{2}+x_2(1-2x_3)-\frac{\eta}{4}.$$
Because every term is nonnegative except for the constant $-\eta/4$. They must all be 0 at optimal solution, and this leads to the uniqueness of the solution $x^*=(0,0,1/4,1/2)$. 
\end{proof}

Denote $L_\eta(x,X):=Q^\eta\bullet X + x^\top c^\eta$. 
Because the matrix $Q^\eta$ is not positive semidefinite because its leading $3\times3$ principal submatrix has determinant $-1/4$. One can always find some $(\bar{x},\bar{X})\in\mathcal{S}_4$ with objective value $L_\eta(\bar{x},\bar{X}) < f_\eta(x^*) = -\eta/4$. Therefore, the following basic observation reveals that any cut strictly feasible at $(x^*,x^*(x^*)^\top)$ cannot close the relaxation gap.

\begin{proposition}
\label{prop:strict-case}
    Let $\ell_1, \cdots, \ell_m$ be a finite family of valid linear cuts and let $x^*$ be specified by Proposition \ref{prop:QP-opt=-eta/4}. Suppose the strict feasibility $\ell_i(x^*,x^*(x^*)^\top)> 0$ holds for all $i=1,\cdots,m$. Then fix any $(\bar{x},\bar{X}) \in \mathcal{S}_4$ s.t. $L_\eta(\bar{x},\bar{X})<f_\eta(x^*)$, there exists some $\epsilon \in(0,1]$ s.t. the point 
    $$(x^\epsilon,X^\epsilon) := \epsilon \cdot (\bar{x},\bar{X})+(1-\epsilon) \cdot (x^*,x^*(x^*)^\top)$$
    satisfies $L_\eta(x^\epsilon,X^\epsilon) < f_\eta(x^*)$ and 
     $\ell_i(x^\epsilon,X^\epsilon)>0$ for all $i=1,\cdots,m$.  
\end{proposition}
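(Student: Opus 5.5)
This is a convexity-plus-continuity argument along the segment joining the rank-one point to $(\bar x,\bar X)$. Write $P^*:=(x^*,x^*(x^*)^\top)$ and $\bar P:=(\bar x,\bar X)$, and set $P^\epsilon:=\epsilon\bar P+(1-\epsilon)P^*$.

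First, I check that $P^\epsilon$ stays in $\mathcal{S}_4$ for every $\epsilon\in[0,1]$. The lifted matrix of $P^*$ is $(1,x^*)(1,x^*)^\top\succeq0$, and the lifted matrix of $\bar P$ is PSD by assumption. The lifting map is affine, so the lifted matrix of $P^\epsilon$ is the corresponding convex combination of these two PSD matrices. It is therefore PSD, since $\mathcal{S}_4$ is convex.

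Second, I handle the objective. Because $L_\eta$ is linear in $(x,X)$,
\[
L_\eta(P^\epsilon)=\epsilon L_\eta(\bar P)+(1-\epsilon)L_\eta(P^*).
\]
We have $L_\eta(P^*)=Q^\eta\bullet x^*(x^*)^\top+(c^\eta)^\top x^*=f_\eta(x^*)$ and $L_\eta(\bar P)<f_\eta(x^*)$. Hence
\[
L_\eta(P^\epsilon)=f_\eta(x^*)+\epsilon\bigl(L_\eta(\bar P)-f_\eta(x^*)\bigr)<f_\eta(x^*)
\]
for every $\epsilon\in(0,1]$. So the objective condition holds for any positive $\epsilon$, and $\epsilon$ only needs to be chosen for the cuts.

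Third, I handle the cuts. Each $\ell_i$ is affine, so $\ell_i(P^\epsilon)=\ell_i(P^*)+\epsilon\bigl(\ell_i(\bar P)-\ell_i(P^*)\bigr)$. Let $\delta:=\min_i \ell_i(P^*)>0$, which is positive by the strict-feasibility hypothesis and the finiteness of the family. Let $M:=\max_i|\ell_i(\bar P)-\ell_i(P^*)|$. If $M=0$, take $\epsilon=1$. Otherwise take $\epsilon:=\min\{1,\delta/(2M)\}$. Then $\ell_i(P^\epsilon)\ge\delta-\epsilon M\ge\delta/2>0$ for all $i$.

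The only point needing care is that finiteness of $m$ is exactly what makes $\delta>0$ and $M<\infty$. The argument has no real obstacle beyond this.
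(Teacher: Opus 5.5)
Your proposal is correct and is essentially the interpolation argument the paper intends. The paper omits the proof as straightforward and later reuses the same convex-combination step, with an explicit $\epsilon$, in the proof of Theorem~\ref{thm:Gap-General}: linearity of $L_\eta$ gives the objective inequality for every $\epsilon\in(0,1]$, and choosing $\epsilon$ small relative to $\min_i \ell_i(x^*,x^*(x^*)^\top)>0$ keeps the finitely many cuts strictly satisfied.
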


The proof of Proposition~\ref{prop:strict-case} is straightforward and is hence omitted. It shows that only the cuts that are active at $(x^*,x^*(x^*)^\top)$ may have the opportunity to close the relaxation gap.  
\begin{remark}
\label{remark:perturbation}
Let $F$ be the minimal box face containing \(x^*\) with two free coordinates $x_3$ and $x_4$: 
\begin{equation}
F :=\left\{x\in[0,1]^4: x_1=x_2=0\right\} ~ .
\label{eq:F0}
\end{equation}
Then Proposition~\ref{prop:strict-case} provides a clear intuition on why we construct the perturbed instance \eqref{eq:bad-family-perturbed} so that $x^*$ stays in the relative interior of $F$. In this case, the coordinates $x^*_3$ and $x_4^*$ are sensitive to perturbation in $Q$ and $c$, while the uniqueness enables precise control of the response. Therefore, we may properly design a perturbed instance $(\tilde{Q},\tilde{c})$ around $(Q^\eta,c^\eta)$ so that for the new QP optimal solution $\tilde{x}^*$, the cuts that are once active at $(x^*,x^*(x^*)^\top)$ but \emph{not the whole face} $F$ become strictly feasible at $(\tilde{x}^*,\tilde{x}^*(\tilde{x}^*)^\top)$, giving a strictly positive relaxation gap on the perturbed instance $(\tilde{Q},\tilde{c})$.
\end{remark}

Though how to construct this appropriately perturbed instance remains nontrivial, it is not a conceptual challenge and shall be presented later. Therefore, the prospective cuts that may potentially close the relaxation gap can then be further limited to the so-called \emph{face-active} cuts, defined as follows. 
\begin{definition}
\label{definition:Face-active-cuts}
We say a valid linear cut $\ell(x, X) \geq 0$ is \emph{face-active} with respect to any given face $F$, if this cut is active on the entire face $F$. 
\end{definition}
Interestingly, these face-active cuts exhibit  an interesting geometry that directly points out the construction method of a numerical certificate.  In detail, consider a general linear cut of form \eqref{eq:general-cuts}. Set $q(x):=\ell(x,xx^\top)$ a quadratic function of $x$. Then let $\ell$ be an arbitrary valid linear cut that is face-active on the entire $F$, we have 
$$q(0,0,x_3,x_4) = c+b_3x_3+b_4x_4 +d_3x_3^2+h_{34}x_3x_4+d_4x_4^2\equiv0$$
for $\forall (x_3,x_4)\in[0,1]^2$. Note that the following fact holds, and will repeated used in later construction.  
\begin{remark} \label{remark:quadratic-zero-set}
Let $g:\mathbb{R}^2\to\mathbb{R}$ be a \emph{nonzero} bivariate quadratic function. Then, geometrically, the zero set $\{z\in\mathbb{R}^2: g(z)=0\}$ can only be one of the following: empty set, singleton, one line, a pair of two lines, hyperbola, parabola, or ellipse, among which none of them has an interior on $\mathbb{R}^2$. 
\end{remark}
As a result, the only possibility for $q(0,0,x_3,x_4)=0$ to hold on the entire face $F$ is that the whole function is 0, that is, we must have $$c=b_3=b_4=d_3=h_{34}=d_4=0,$$
while only the remaining nine coefficients  
$$\alpha = \left(b_1,b_2,h_{13},h_{14},h_{23},h_{24},d_1,d_2,h_{12}\right)^\top$$ could be nonzero. Define
\begin{equation}
\label{eq:psi-Psi}
\begin{aligned}
    &\psi(x):= \big(x_1, x_2, x_1x_3, x_1x_4, x_2x_3, x_2x_4, x_1^2, x_2^2, x_1x_2\big)^{\top}\\
    &\Psi (x,X):= \big(x_1, x_2, X_{13}, X_{14}, X_{23}, X_{24}, X_{11}, X_{22}, X_{12}\big)^{\top}
\end{aligned}
\end{equation}
where $\Psi$ corresponds to the lifted form of $\psi$ so that $\Psi(x,xx^\top)=\psi(x)$. The above discussion suggests that any valid linear cut $\ell$ that is face-active on the entire $F$ can be written as 
\begin{equation}
    \label{eq:Psi}
    \ell(x,X) = \alpha^\top\Psi(x,X),
\end{equation}
for some $\alpha\in\mathbb{R}^9$. Thus the quadratic function $q(x) = \ell(x,xx^\top)$ induced by the linear cut $\ell$ can also be written as
$q(x) = \alpha^\top\psi(x)$ with the same coefficient $\alpha.$ Finally, notice that we require $\ell$ to be a \emph{valid} linear cut such that $q(x)\geq0$ for all $x\in[0,1]^4$. This immediately implies that the coefficient $\alpha$ must stay in the dual cone of the following cone in $\mathbb{R}^9$: 
\begin{equation}
\label{eq:cone}
\mathcal{K} := \operatorname{cone} \left\{ \psi(x):x\in[0,1]^4 \right\}.
\end{equation}
We summarize the above observation as the next lemma. 
\begin{lemma}
\label{lemma:dual-cone-alpha}
    Let $\ell$ be an arbitrary valid linear cut in the form \eqref{eq:general-cuts}. If this cut is face-active on the entire $F$ given in \eqref{eq:F0}, then its coefficients $c,b_3,b_4,d_3,h_{34},d_4$ must all vanish, and $\ell$ must take the form \eqref{eq:Psi} with some $\alpha\in\mathcal{K}^*$, with the dual cone defined by $\mathcal{K}^*:=\left\{y:y^\top z\geq0,\ \forall z\in\mathcal{K}\right\}.$
\end{lemma}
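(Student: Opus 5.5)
The plan is to make the discussion preceding the lemma rigorous in three steps: restrict the cut to the face, use the zero-set fact of Remark~\ref{remark:quadratic-zero-set} to kill the face coefficients, and then read validity as membership in $\mathcal{K}^*$.

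\emph{Step 1: restriction to the face.} Write $\ell$ in the general form \eqref{eq:general-cuts} with coefficients $c_0,b_i,d_i,h_{ij}$, and set $q(x):=\ell(x,xx^\top)$. Face-activity on $F$ means $\ell(x,xx^\top)=0$ for every $x\in F$. This is the natural reading, since $F$ is a set of points of $[0,1]^4$ and the cut is evaluated at their rank-one lifts. So the bivariate quadratic
$$g(x_3,x_4):=q(0,0,x_3,x_4)=c_0+b_3x_3+b_4x_4+d_3x_3^2+h_{34}x_3x_4+d_4x_4^2$$
vanishes on all of $[0,1]^2$, which has nonempty interior in $\mathbb{R}^2$.

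\emph{Step 2: vanishing of the face coefficients.} If $g$ were a nonzero polynomial, Remark~\ref{remark:quadratic-zero-set} says its zero set has empty interior, which is a contradiction. Hence $g\equiv 0$ as a polynomial, and so $c_0=b_3=b_4=d_3=h_{34}=d_4=0$. To avoid relying on the classification of conics, I would instead argue directly: a polynomial vanishing on an open set has all derivatives zero at an interior point, so all of its coefficients vanish. The remaining coefficients are exactly $b_1,b_2,h_{13},h_{14},h_{23},h_{24},d_1,d_2,h_{12}$. Collecting them in $\alpha$, in the order used for $\Psi$ in \eqref{eq:psi-Psi}, gives $\ell(x,X)=\alpha^\top\Psi(x,X)$ for all $(x,X)$. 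This is the form \eqref{eq:Psi}, and it also gives $q(x)=\alpha^\top\psi(x)$ because $\Psi(x,xx^\top)=\psi(x)$.

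\emph{Step 3: dual-cone membership.} By Definition~\ref{definition:Valid-Cuts}, validity gives $\alpha^\top\psi(x)=q(x)\ge0$ for all $x\in[0,1]^4$. Any $z\in\mathcal{K}$ is a finite nonnegative combination $z=\sum_k\mu_k\psi(x^{(k)})$ with $\mu_k\ge0$ and $x^{(k)}\in[0,1]^4$. By linearity, $\alpha^\top z=\sum_k\mu_k\,\alpha^\top\psi(x^{(k)})\ge0$. Therefore $\alpha\in\mathcal{K}^*$.

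The only step with any substance is the polynomial-identity argument in Step 2. It is routine, and there is no real obstacle here.
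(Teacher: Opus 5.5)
Your proposal is correct and follows essentially the same route as the paper: restrict $q(x)=\ell(x,xx^\top)$ to the face $F$, use the fact that a bivariate quadratic vanishing on $[0,1]^2$ must be identically zero to kill $c,b_3,b_4,d_3,h_{34},d_4$, then read validity $\alpha^\top\psi(x)\ge 0$ on $[0,1]^4$ as $\alpha\in\mathcal{K}^*$. Your derivative-at-an-interior-point argument is only a self-contained substitute for Remark~\ref{remark:quadratic-zero-set}, and your explicit conic-combination step just spells out the paper's final line.
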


Therefore, whether there exists a set of face-active cuts that can remove all the candidate points $(\bar{x},\bar{X})$ in Proposition \ref{prop:strict-case} reduces to the \emph{strict} feasibility problem for the following set: 
\begin{align*}
\left\{(x,X)\in\mathcal{S}_4\,: \,\Psi(x,X)\in \mathcal{K},\, L_\eta(x,X) <-\frac{\eta}{4}\right\}.
\end{align*} 
Therefore, if this set is nonempty, then no collection of face-active cuts can remove all candidate points, otherwise the problem still remains open. This type of problem is in general not easy to solve. Yet fortunately, for this specific problem (under relative low dimension), it turns out that we can find such a strictly feasible solution by solving the following SDP inner approximation:
\begin{equation}
\label{eq:conic-certificate-SDP}
\begin{aligned}
    & \mathop{\rm minimize}_{(x,X)\in\mathcal{S}_4,\lambda\in\mathbb{R}_+^{64}} \,\,\, Q^\eta\bullet X \,+\, x^\top c^\eta\\
    & \qquad \mathrm{s.t.} \quad \Psi(x,X) = \lambda_1 \psi(\xi^1) +\cdots  + \lambda_{64}\psi(\xi^{64}),
\end{aligned}
\end{equation}
where the 64 points $\xi^s\in[0,1]^4$ are constructed by selecting the first two coordinates $\xi_{1},\xi_{2}\in\left\{0,\frac13,\frac23,1\right\}$ and the last two $\xi_{3},\xi_4\in\{0,1\}$.  The next proposition provides such a certificate. For the simplicity of the certificate, rather than the optimal solution to \eqref{eq:conic-certificate-SDP}, we select a clean feasible solution  with sufficiently small objective value.\footnote{When numerically solving \eqref{eq:conic-certificate-SDP} for a certificate, one can fix any small enough $\eta$, e.g., $\eta=1/100$, and then find a clean rational certificate around it. Finer grids, e.g. $\{0,1/4,1/2,3/4,1\}$, may produce a certificate with larger gap, but is unnecessary in this proof.}

\begin{proposition}
\label{prop:conic-certificate}
There is an exact feasible solution $(\bar{x},\bar{X},\bar{\lambda})$  to problem \eqref{eq:conic-certificate-SDP} such that $$L_\eta(\bar{x},\bar{X}) - f_\eta(x^*)< -1/200 $$
as long as we select the perturbation parameter $\eta \leq 1/100$. 
\end{proposition}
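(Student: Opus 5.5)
The plan is to exhibit an explicit rational certificate and check it by exact arithmetic, as was done for Proposition~\ref{prop:BQP-certificate}. First I would numerically solve \eqref{eq:conic-certificate-SDP} with a fixed small perturbation, say $\eta=1/100$. From the solution I would extract a sparse nonnegative weight vector $\bar\lambda$ supported on a few grid points $\xi^s$. I would then round $\bar\lambda$, and the free entries of $(\bar x,\bar X)$ not determined by $\Psi$, to clean rationals. The components of $\Psi(\bar x,\bar X)$ are defined by the identity $\Psi(\bar x,\bar X)=\sum_s\bar\lambda_s\psi(\xi^s)$. The remaining coordinates $\bar x_3,\bar x_4,\bar X_{33},\bar X_{34},\bar X_{44}$ are chosen freely, together with any entries not appearing in $\Psi$. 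This makes the conic constraint hold exactly by construction, with $\bar\lambda\ge0$ verified componentwise.

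Next I would verify $(\bar x,\bar X)\in\mathcal S_4$, i.e.\ that the $5\times5$ lifted matrix $\bar Y$ is positive semidefinite. To keep this exact, I would choose the rounding so that $\bar Y$ is strictly positive definite. Then Sylvester's criterion applies, and I check that all five leading principal minors are positive rationals. Alternatively, I would present an explicit rational $LDL^\top$ factorization with nonnegative $D$. If rounding destroys definiteness, I would mix in a small multiple of a strictly feasible interior point. A natural choice is a convex combination of $\{(\xi,\xi\xi^\top)\}$ over grid points, which is feasible for both constraints because $\psi$ is lifted consistently. This restores definiteness while costing only a little objective value.

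Finally, the objective. I would write $L_\eta(\bar x,\bar X)-f_\eta(x^*)=L_0(\bar x,\bar X)+\eta(\bar X_{44}-\bar x_4+1/4)$, where $L_0$ is the unperturbed objective with $a=b=1/2$. I would compute $L_0(\bar x,\bar X)$ exactly and aim for a value of at most about $-1/100$, comparable to Proposition~\ref{prop:BQP-certificate}. The coefficient $\bar X_{44}-\bar x_4+1/4$ is bounded by a small constant, e.g.\ by $1/4$ if $\bar X_{44}\le\bar x_4$. This holds automatically if the relevant mass is convex-combined from $\{0,1\}$ values of $\xi_4$. Then for every $\eta\le1/100$ the gap is below $-1/100+1/400<-1/200$, and monotonicity in $\eta$ gives the statement uniformly.

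The main obstacle is the first step: finding a certificate for which the constrained set is genuinely strictly negative. The cone constraint $\Psi\in\mathcal K$ is much tighter than BQP membership, since it couples the diagonal entries $X_{11},X_{22}$. I would try the $4\times4\times2\times2$ grid first and refine it if the numerical value is not clearly below $-1/200-\eta/4$. The rational rounding must also preserve both PSD-ness and a margin in the objective, which I would handle by the interior-mixing trick above.
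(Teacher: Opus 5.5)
Your strategy of exhibiting an exact rational point $(\bar x,\bar X,\bar\lambda)$ and verifying it by hand is the paper's strategy. The gap is that the proposal never produces the point, and for this proposition the point is the entire proof. Nothing you wrote shows that \eqref{eq:conic-certificate-SDP}, with its prescribed 64 grid points, actually has a feasible point with $L_\eta(\bar x,\bar X)-f_\eta(x^*)<-1/200$. You flag this yourself as the main obstacle and leave it open. Your fallback of refining the grid changes the feasible set of \eqref{eq:conic-certificate-SDP}, so it cannot prove the statement as written. The paper closes this with an explicit choice: $\xi^1=(0,\tfrac13,1,1)$, $\xi^2=(\tfrac23,1,1,1)$, $\xi^3=(\tfrac13,0,1,0)$, weights $\bar\lambda=(\tfrac{15}{16},\tfrac{3}{16},\tfrac18)$, and $\bar x=(\tfrac16,\tfrac12,1,\tfrac98)$ with $\bar X$ as in Appendix~\ref{app:conic-certificate}. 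One then checks that $\Psi(\bar x,\bar X)=\sum_s\bar\lambda_s\psi(\xi^s)$ entrywise, that $\bar X-\bar x\bar x^\top\succeq0$ (it has rank two), and that $L_\eta(\bar x,\bar X)-f_\eta(x^*)=-\tfrac{25}{2304}+\tfrac{31}{64}\eta$.

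Your treatment of the $\eta$-term also rests on a false premise. The coordinates $x_4$ and $X_{44}$ are not components of $\Psi$, so $\bar\lambda$ does not constrain them, and nothing makes $\bar X_{44}\le\bar x_4$ automatic. What does constrain them is the PSD condition, which ties them to the fixed $\Psi$-data. If every coordinate came from one convex combination of lifted grid points, the point would lie in $\mathcal M_4$ and there would be no gap. Any certificate therefore needs a completion outside that hull; in the paper, the weights sum to $5/4$ and $\bar x_4=9/8>1$.

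For the paper's $\Psi$-data, take the principal submatrix indexed by $(1,x_1,x_2,x_4)$. Its PSD condition forces $X_{44}\ge 11x_4^2-21x_4+\tfrac{177}{16}$, hence $X_{44}-x_4\ge 11(x_4-1)^2+\tfrac1{16}\ge\tfrac1{16}$. So your target coefficient $\le\tfrac14$ is unattainable there.

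The correct bookkeeping is simpler. Set $\kappa:=\bar X_{44}-\bar x_4+\tfrac14$. PSD gives $\kappa\ge(\bar x_4-\tfrac12)^2\ge0$, so $\eta=1/100$ is the worst case, and it suffices that $L_0(\bar x,\bar X)+\kappa/100<-1/200$. The paper's certificate has $L_0=-\tfrac{25}{2304}$ and $\kappa=\tfrac{31}{64}$, which clears the threshold by about $10^{-3}$.
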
 

The detailed values the certificate $(\bar{x},\bar{X},\bar{\lambda})$ are provided in Appendix \ref{app:conic-certificate}. The verification only contains simple numerical computation and is hence omitted. This proposition suggests that there exists a certificate $(\bar{x},\bar{X})$ that is, unfortunately, impossible to be cut off by the face-active constraints.

Combining the observations in Proposition \ref{prop:strict-case}, Remark \ref{remark:perturbation}, and Proposition \ref{prop:conic-certificate}, we are able to claim that there is no finite set of valid linear cuts can close the SDP relaxation gap for the submodular BCQP \eqref{eq:boxqp}, in dimension $n=4$. It now remains to provide the precise construction of the perturbed instance promised in Remark \ref{remark:perturbation}. 

\begin{lemma}
    \label{lemma:perturb-optimality}
    Let us fix any $\eta\leq1/100$. Let $x^*(\theta):=(0,0,\theta_1,\theta_2)$ for some parameter $\theta\in\mathbb{R}^2$ and let $\theta^*:=(1/4,1/2)=(x^*_3,x^*_4)$ s.t. $x^*(\theta^*) = x^*$. Then there exists an open neighborhood $\Theta^\eta\ni\theta^*$ s.t. for $\forall\theta\in\Theta^\eta$, $x^*(\theta)$ is the global optimal solution of the perturbed instance $(Q^{\eta,\theta},c^{\eta,\theta})$ with  
    $$Q^{\eta,\theta}=Q^\eta,\quad c_{I}^{\eta,\theta} = c_{I}^{\eta},\quad c_{J}^{\eta,\theta} = c_{J}^{\eta} - 2Q_{JJ}^\eta(\theta-\theta^*),$$ 
    where $I=\{1,2\}$ and $J=\{3,4\}$. By choosing $\Theta^\eta$ small enough, we can guarantee for all $\theta\in\Theta^\eta$ that 
    $$L_{\eta,\theta}(\bar{x},\bar{X})-f_{\eta,\theta}(x^*(\theta))<-1/250,$$
    where $(\bar{x},\bar{X})$ is the certificate in Proposition \ref{prop:conic-certificate}, $L_{\eta,\theta}(x,X):=Q^{\eta,\theta}\bullet X+x^\top c^{\eta,\theta}$ and $f_{\eta,\theta}(x):=x^\top Q^{\eta,\theta}x + x^\top c^{\eta,\theta}$ denotes the SDP and QP objective functions of the perturbed instance, respectively.  
\end{lemma}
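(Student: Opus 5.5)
The perturbation is a completing-the-square shift: it keeps the objective's structure and only moves the minimizer along the face $F$. We show $x^*(\theta)$ is globally optimal by an exact identity plus strict inequalities that survive small changes in $\theta$. The gap bound then follows by continuity.

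\textbf{Step 1: rewrite the objective.} Let $z=(x_3,x_4)$. Since $Q^{\eta,\theta}=Q^\eta$ and only $c_J$ changes,
\[
f_{\eta,\theta}(x)=f_\eta(x)-2(\theta-\theta^*)^\top Q^\eta_{JJ}z .
\]
Take $Q^\eta_{JJ}=\begin{pmatrix}1&-1/2\\-1/2&1/4+\eta\end{pmatrix}$, which is positive definite for $\eta>0$. Completing the square gives
\[
f_{\eta,\theta}(x)=x_I^\top Q^\eta_{II}x_I+2x_I^\top Q^\eta_{IJ}z+c_I^\top x_I+(z-\theta)^\top Q^\eta_{JJ}(z-\theta)+c_J^{\eta\top}z+\mathrm{const}(\theta).
\]
With $x_I=0$, the restriction to $F$ is $(z-\theta)^\top Q^\eta_{JJ}(z-\theta)+{}$an affine function of $z$. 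The choice of $c_J^{\eta,\theta}$ exactly cancels the shift in the stationarity condition. Hence, for $\theta$ near $\theta^*$ (so $\theta$ lies in $(0,1)^2$), $x^*(\theta)$ is the unique minimizer over $F$. This is because $\nabla_z f_{\eta,\theta}(x^*(\theta))=\nabla_z f_\eta(x^*)=0$: $x^*$ is an interior minimizer of $f_\eta$ on $F$.

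\textbf{Step 2: global optimality (the main obstacle).} We must show $f_{\eta,\theta}(x)\ge f_{\eta,\theta}(x^*(\theta))$ over the whole box, not just on $F$. I would use local-to-global reasoning on the nonconvex function.

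First, check the KKT conditions at $x^*(\theta)$: for $i\in I$, $\partial_i f_{\eta,\theta}(x^*(\theta))=c_i+2(Q_{IJ}\theta)_i$. At $\theta^*$ this equals $c_1+2Q_{13}/4=1/2>0$ and $c_2+2(Q_{23}\cdot\tfrac14)=1-\tfrac12=1/2>0$. These are strictly positive and continuous in $\theta$, so strict complementarity holds near $\theta^*$.

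Next, reuse the decomposition from Proposition~\ref{prop:QP-opt=-eta/4}. In the branch $x_3\le 1/2$, the nonnegative terms $x_1/2+x_2(1-2x_3)$ give linear growth in $x_I$ with slope bounded away from zero when $x_3$ is near $1/4$. In the other branch, and away from $x^*$, $f_\eta-f_\eta(x^*)\ge\delta>0$ by compactness and uniqueness.

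So split the box into a small ball $B$ around $x^*$ and its complement. On the complement, $f_\eta-f_\eta(x^*)\ge\delta$, while the perturbation changes values by $O(|\theta-\theta^*|)$; this is fine for small $\Theta^\eta$. On $B$, use the quadratic Taylor expansion at $x^*(\theta)$. The gradient in $x_I$ is at least $1/2-O(|\theta-\theta^*|)$, the $J$-part is positive definite, and the cross and $x_I$-quadratic terms are $O(\|x_I\|\cdot r)$, dominated by the linear term for a small radius $r$. This handles the awkward uniformity in $\theta$; making the constants explicit is the delicate part.

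\textbf{Step 3: the gap.} $L_{\eta,\theta}(\bar x,\bar X)-f_{\eta,\theta}(x^*(\theta))$ is continuous in $\theta$ and is $<-1/200$ at $\theta^*$ by Proposition~\ref{prop:conic-certificate}. So it is $<-1/250$ on a sufficiently small neighborhood; intersect this neighborhood with the one from Step 2 to obtain $\Theta^\eta$.
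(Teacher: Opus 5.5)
Your proof is correct, but it takes a different route from the paper's.

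\textbf{Your route.} You argue softly. Uniqueness of $x^*$ (Proposition~\ref{prop:QP-opt=-eta/4}) plus compactness gives a margin $\delta(r)>0$ off a ball $B_r(x^*)$. That margin survives the $O(|\theta-\theta^*|)$ change in both $f_{\eta,\theta}(x)$ and $f_{\eta,\theta}(x^*(\theta))$. Inside the ball, the exact expansion $\Delta_f=g^\top d+d^\top Q^\eta d$ finishes the job, using $g=\nabla f_{\eta,\theta}(x^*(\theta))=(1/2,\,1-2\theta_1,\,0,\,0)$, $d_I=x_I\ge0$ and $Q^\eta_{JJ}\succ0$. The only ``uniformity'' needed is to fix $r$ first, independently of $\theta$, and then shrink $\Theta^\eta$ to radius at most $r$; nothing delicate remains. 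This is essentially the general strict-complementarity argument the paper mentions just before the lemma and sets aside. Your compactness step supplies the global part that the paper's informal sketch leaves implicit, since stability of the active face alone only yields local optimality.

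\textbf{The paper's route.} The paper turns your expansion into an explicit identity. The one indefinite term, $-2d_2d_3$, is absorbed into the linear part as $(1-2x_3)d_2$. This gives $\Delta_f=\tfrac{d_1}{2}+(1-2x_3)d_2+(\tfrac{d_1}{2}-d_2)^2+(d_3-\tfrac{d_4}{2})^2+\eta d_4^2$, which is termwise nonnegative on $\{x_3\le1/2\}$ for every $\theta$. On $\{x_3>1/2\}$ it is regrouped as two nonnegative terms plus a function $H_\theta(x_3,x_4)$ with $H_{\theta^*}\ge\eta/(4(1+4\eta))$ on $[1/2,1]\times[0,1]$.

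\textbf{Comparison.} The paper's route is quantitative: continuity in $\theta$ is used only for one two-variable function with an explicit margin, so a concrete radius for $\Theta^\eta$ could be extracted. Yours needs no clever regrouping and would apply to any QP whose unique minimizer has strict complementarity and a positive definite reduced Hessian on its face. The cost is a non-explicit $\delta(r)$, which is harmless here because Theorem~\ref{thm:Gap-General} only needs some open $\Theta^\eta$.

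\textbf{Minor slip in Step~1.} Your completed square keeps $c_J^{\eta\top}z$ but drops $2\theta^{*\top}Q^\eta_{JJ}z$. Their sum is $(c_J^\eta+2Q^\eta_{JJ}\theta^*)^\top z=0$, so on $F$ the objective is exactly $(z-\theta)^\top Q^\eta_{JJ}(z-\theta)$ plus a constant. This is consistent with the stationarity argument you actually use.
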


Intuitively, note that $Q_{JJ}^{\eta,\theta} = Q_{JJ}^{\eta}\succ0$ always hold, and the perturbed vector $c^{\eta,\theta}$ is constructed by solving the KKT condition $\nabla_{J} f_{\eta,\theta}(x^*(\theta))=0$. Because $\nabla_1f_{\eta}(x^*) = \nabla_2f_{\eta}(x^*) = 1/2>0$, slightly perturbing $\theta$ will maintain their positiveness, and will also not change the optimal active face. Then the lemma can be proved. However, instead of such a general argument, our quadratic objective function allows a simpler proof, as provided in Appendix \ref{app:perturb-optimality}. 

\begin{theorem}
\label{thm:Gap-General}
In dimension $n = 4$, there exists an absolute constant $\delta_{\rm Gap}>0$ such that for an arbitrary set of fixed valid linear cuts $\{\ell_1, \cdots, \ell_m\}$, there exists a submodular BCQP instance $(Q,c)$ for which the SDP relaxation \eqref{eq:sdp-strengthen} has a strictly positive relaxation gap of at least ${\delta_{\rm Gap}}/{(m+1)^2}$. 
\end{theorem}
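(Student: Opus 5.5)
Fix $\eta=1/100$ and work with the perturbed family $(Q^{\eta,\theta},c^{\eta,\theta})$ of Lemma~\ref{lemma:perturb-optimality}, $\theta\in\Theta^\eta$. For each such $\theta$ the QP optimum is $x^*(\theta)=(0,0,\theta_1,\theta_2)$, and the conic certificate $(\bar x,\bar X)$ of Proposition~\ref{prop:conic-certificate} has $L_{\eta,\theta}(\bar x,\bar X)-f_{\eta,\theta}(x^*(\theta))<-1/250$. The plan is to choose $\theta$ so that every cut is either face-active on $F$ or strictly slack at the lifted point. Then, as in Proposition~\ref{prop:strict-case}, we mix $(\bar x,\bar X)$ with $(x^*(\theta),x^*(\theta)x^*(\theta)^\top)$ and obtain a quantitative gap.

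First split the cuts into two groups. Let $\mathcal{A}$ be the cuts that vanish identically on $F$, i.e.\ $q_i(0,0,x_3,x_4)\equiv0$. By Lemma~\ref{lemma:dual-cone-alpha}, each $\ell_i\in\mathcal A$ has the form $\alpha_i^\top\Psi$ with $\alpha_i\in\mathcal K^*$. Since $\Psi(\bar x,\bar X)\in\mathcal K$ (from feasibility in \eqref{eq:conic-certificate-SDP}) and $\Psi(x,xx^\top)=\psi(x)\in\mathcal K$, every convex combination of the two points satisfies all cuts in $\mathcal A$. For each remaining cut, $g_i(z):=q_i(0,0,z_1,z_2)$ is a nonzero bivariate quadratic that is nonnegative on $[0,1]^2$.

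Next, pick a good $\theta$. After normalizing each $g_i$ (e.g.\ so that the maximum of $|g_i|$ over $\Theta^\eta$ equals $1$), the multivariate Rémez inequality for degree-2 polynomials on the convex body $\Theta^\eta$ gives $|\{\theta\in\Theta^\eta: g_i(\theta)\le t\}|\le C\,t^{1/2}|\Theta^\eta|$. Taking $t=c_0/(m+1)^2$ with $c_0$ small, a union bound over the at most $m$ cuts leaves a $\theta$ with $g_i(\theta)\ge c_0/(m+1)^2$ for every $i\notin\mathcal A$. The normalization is harmless because cut feasibility is invariant under positive scaling. The point is that this is where the $(m+1)^{-2}$ comes from: the square root in Rémez for degree $2$ forces $t\sim 1/m^2$.

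Finally, mix. Set $(x^\epsilon,X^\epsilon)=\epsilon(\bar x,\bar X)+(1-\epsilon)(x^*(\theta),x^*(\theta)x^*(\theta)^\top)$. Since the cuts are affine, $\ell_i(x^\epsilon,X^\epsilon)\ge (1-\epsilon)g_i(\theta)+\epsilon\,\ell_i(\bar x,\bar X)$. We need a uniform lower bound $\ell_i(\bar x,\bar X)\ge -M$ under the same normalization. This is the step I expect to be the main obstacle. The normalization controls $g_i$ only on $\Theta^\eta$, whereas $\ell_i$ involves the other nine coefficients. I would try to handle this either by choosing a different normalization that bounds all coefficients (using validity on $[0,1]^4$ and equivalence of norms on quadratics), or by decomposing $\ell_i$ into its $F$-part plus an $\alpha^\top\Psi$-type remainder, and bounding the remainder using the structure of $\bar X$ together with validity. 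With such an $M$, take $\epsilon\asymp c_0/(M(m+1)^2)$. Then every cut holds, and
\[
L_{\eta,\theta}(x^\epsilon,X^\epsilon)-f_{\eta,\theta}(x^*(\theta))=\epsilon\bigl(L_{\eta,\theta}(\bar x,\bar X)-f_{\eta,\theta}(x^*(\theta))\bigr)<-\epsilon/250 .
\]
This gives the gap $\delta_{\rm Gap}/(m+1)^2$. If needed, the instance is rescaled into $\mathcal N^4_\infty$, at the cost of an absolute constant.
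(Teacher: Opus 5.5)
Your architecture matches the paper's: the face-active versus non-face-active split, Rémez plus a union bound to choose $\theta$, then interpolation toward the conic certificate. But the step you flag as the main obstacle is a genuine gap, and it is the crux of the quantitative bound. What is needed is \eqref{eq:Certificate-lower-bound}: a constant $C_D$, independent of the cut, with $\ell(\bar x,\bar X)\ge -C_D\|g_\ell\|_D$ for every valid cut. Here $g_\ell(\theta)=\ell(x^*(\theta),x^*(\theta)x^*(\theta)^\top)$, and $D\subseteq\Theta^\eta$ is a fixed compact rectangle; that is also where you should run Rémez, not on the open set $\Theta^\eta$. Without this inequality, $\epsilon$ depends on the individual cuts and not only on $m$.

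Your first suggested route cannot produce it. The Rémez margin is only $\varrho\|g_\ell\|_D$, and $\|g_\ell\|_D$ can be arbitrarily small compared with the full coefficient norm of $\ell$ (e.g.\ $\ell=x_1+\delta x_3$ with $\delta\to0$). So under any normalization that bounds all fifteen coefficients, the margin has no uniform lower bound, and the two estimates cannot be combined. Indeed, for $g_\ell=0$ the inequality says that face-active cuts, with arbitrarily large $\alpha\in\mathcal K^*$, never cut off $(\bar x,\bar X)$. It must therefore come from the conic structure of the certificate, not from coefficient bounds.

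Your second route is the right one, but the mechanism is missing: the idea is to eliminate $\alpha$, not to bound it.
\begin{itemize}
\item Write $\ell=\alpha^\top\Psi+R$, where $R$ collects the constant and the $x_3,x_4,X_{33},X_{34},X_{44}$ terms. Then $R(x,xx^\top)=g_\ell(x_3,x_4)$, and $R(\bar x,\bar X)$ is a fixed linear functional of the six coefficients of $g_\ell$.
\item Since $\Psi(\bar x,\bar X)\in\mathcal K$, it has a representation $\sum_i\bar\lambda_i\psi(\xi^i)$ with $\bar\lambda_i\ge0$ and $\xi^i\in[0,1]^4$ (Appendix~\ref{app:conic-certificate} gives one with three points).
\item Validity of $\ell$ at the rank-one points $(\xi^i,\xi^i(\xi^i)^\top)$ gives $\alpha^\top\psi(\xi^i)=\ell(\xi^i,\xi^i(\xi^i)^\top)-g_\ell(\xi^i_3,\xi^i_4)\ge -g_\ell(\xi^i_3,\xi^i_4)$.
\item Hence $\ell(\bar x,\bar X)\ge R(\bar x,\bar X)-\sum_i\bar\lambda_i\,g_\ell(\xi^i_3,\xi^i_4)$.
\item The right-hand side is linear in $g_\ell$ alone. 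So it is at least $-C_D\|g_\ell\|_D$ by equivalence of norms on the six-dimensional space of bivariate quadratics; $\|\cdot\|_D$ is a norm there because $D$ has nonempty interior.
\end{itemize}
With this bound in hand, your choice $\epsilon\asymp c_0/(C_D(m+1)^2)$ completes the proof exactly as in the paper.
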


\begin{proof} Fix $\eta=1/100$ for simplicity, we partition the set of valid linear cuts into two subsets. Define $J_1:= \left\{j\in[m]: \ell_j(x, x(x)^\top)= 0, \forall x\in F\right\}$ as the set of face-active cuts on $F$, and let $J_2 := [m]\backslash J_1$. 
Then if $J_2 = \emptyset$, then we can directly work with the unperturbed instance $(Q^\eta,c^\eta)$. Otherwise, we switch to a perturbed instance $(Q^{\eta,\theta},c^{\eta,\theta})$ for some appropriately selected $\theta\in\Theta^\eta$.

Now suppose that $J_2\neq\emptyset$. For a linear cut $\ell = \ell_j$ with $j\in J_2$, $q(\theta):=\ell(x^*(\theta),x^*(\theta)x^*(\theta)^\top)$ will be a nonzero bivariate quadratic function of the form: 
\begin{equation}
    \label{eq:q-theta}
    q_\ell(\theta) = c+b_3\theta_1+b_4\theta_2 +d_3\theta_1^2+h_{34}\theta_1\theta_2+d_4\theta_2^2,
\end{equation}
where the coefficients are written in consistence with \eqref{eq:general-cuts}. Note that $\Theta^\eta$ is an open set, we can select a compact rectangular $D\subseteq\Theta^\eta$ with nonempty interior. Then, there exists $C_D>0$ such that
\begin{equation}
    \label{eq:Certificate-lower-bound}
    \ell(\bar{x},\bar{X})\geq - C_D\cdot\|q_\ell\|_{D},\,\,  \forall \ell
\end{equation}
where $(\bar{x},\bar{X})$ is given in Proposition \ref{prop:conic-certificate}, and $\|q\|_{D}:=\max\left\{|q(\theta)|:\theta\in D\right\}$ can be viewed as a norm over the six-dimensional linear space of bivariate quadratic functions. The proof relies on the detailed numerical values of the certificate, and is hence provided in Appendix \ref{app:certificate-lower-bound}. On the other hand, for a valid linear cut with $q(\theta)\geq0, \forall \theta\in D$, the multivariate R\'emez inequality \cite{brudnyui1973extremal} indicates that the area in which the ``feasibility margin'' is small cannot be very large for non-face-active cuts:
\begin{equation}
\label{eq:Remez}
\frac{\big|\!\big\{\theta\!\in\! D\!:\! q_\ell(\theta)\leq \varrho\|q_\ell\|_{D}\big\}\!\big|}{\sqrt{\varrho}\cdot |D|} \leq C_R,  \forall  q_{\!\ell}\!\neq0, \varrho\!\in\!(0,\!1]
\end{equation}
where $|\cdot|$ denotes the Lebesgue measure in $\mathbb{R}^2$, and $C_R$ is a constant for Remez inequality that relies only on the degree of $q_\ell$ and the dimension of $\theta$ (both are 2). Then set $\varrho = \frac{1}{4C_R^2m^2}$ and apply union bound gives $$\bigg|\bigcup_{j\in J_2}\{\theta\in D: q_{\ell_j}(\theta)\leq \varrho\|q_{\ell_j}\|_{D}\}\bigg| \leq |D|/2.$$ 
We can then select a $\theta\in D$ such that
\[\ell_j(x^*(\theta), x^*(\theta)x^*(\theta)^\top) = 0, \quad\qquad\,\,\, \mbox{for}\quad \forall j\in J_1,\]
\[\ell_j(x^*(\theta), x^*(\theta)x^*(\theta)^\top) \,> \frac{\|q_{\ell_j}\|_D}{4C_R^2m^2},\! \quad\mbox{for}\quad \forall j\in J_2.\] 
Consequently, take the  certificate $(\bar{x},\bar{X})$ in Proposition \ref{prop:conic-certificate}. Then $\ell_j(\bar{x},\bar{X})\geq0$ for $\forall j\in J_1$, and $f_{\eta,\theta}(x^*(\theta))-L_{\eta,\theta}(\bar{x},\bar{X})>1/250$ by Lemma \ref{lemma:perturb-optimality}. Then it remains to follow the interpolation approach in Proposition \ref{prop:strict-case} and choose $\epsilon = (4C_DC_R^2m^2+1)^{-1}$ so that the point 
$$(x^\epsilon,X^\epsilon) = \epsilon\cdot(\bar{x},\bar{X}) + (1-\epsilon)\cdot(x^*(\theta), x^*(\theta)x^*(\theta)^\top)$$
is feasible to all cuts, and $$f_{\eta,\theta}(x^*(\theta))-L_{\eta,\theta}(x^\epsilon,X^\epsilon) > \epsilon/250 \geq \delta_{\rm Gap}/(m+1)^2$$
for some $\delta_{\rm Gap}>0$. 
\end{proof}


\section{An $\Omega(n)$ Relaxation Gap for General Dimension $n\geq4$}
In this section, through the technique of zero padding and duplication, we further extend the current relaxation gap lower bounds to arbitrary dimension $n\geq4$, with an $\Omega(n)$ relaxation gap for BQP valid cuts, and an $\Omega(n/m^2)$ relaxation gap for $m$ general valid cuts. In details, for any hard instance $\big(Q^{(4)},c^{(4)}\big)\in \mathcal N_\infty^{4}$, we define its  $k$-fold duplication for some $k\geq1$ as  
\begin{equation}
\label{eq:QC-duplicate}
Q^{(4k)} = I_k\otimes Q^{(4)}\quad\mbox{and}\quad  c^{(4k)} = \mathbf{1}_k\otimes c^{(4)}, 
\end{equation}
where $I_k$ denotes the $k\times k$ identity matrix, $\mathbf{1}_k$ denotes the $k\times 1$ all-one vector, and $\otimes$ denote the Kronecker product. Denote $f_n(x) = x^\top Q^{(n)}x + x^\top c^{(n)}$ the QP objective function and $p^{(n)}$ the corresponding minimum value over $[0,1]^n$, then \eqref{eq:QC-duplicate} basically duplicates the $n=4$ instance for $k$ times: $f_{4k}(x) = \sum_{i=1}^k f_4(x^i)$ with each $x^i\in[0,1]^4$ being an independent decision variable. Therefore, we have $p^{(4k)} = k\cdot p^{(4)}$ always holds. Then, based on whether the set of $m$ linear cuts are BQP valid cuts or general valid linear cuts, it remains to construct the $k$-fold duplicated certificate points, respectively. For notational simplicity, we also denote $L_n(x,X)=Q^{(n)}\bullet X + x^\top c^{(n)}$.  \vspace{0.2cm}

\noindent\textbf{General duplication.}
Suppose we are encountering $m$ general valid linear cuts $\ell^{(4k)}_1,\cdots,\ell_m^{(4k)}$ in dimension $4k$. For any lifted variable $\big(x^{(4)}, X^{(4)}\big) \in \mathbb{R}^{4} \times \mathbb{S}^{4}$, we define its $k$-fold duplication as 
\begin{equation}
\label{eq:Gen-Duplicate}
x^{(4k)} = \mathbf{1}_k\otimes x^{(4)}\quad\mbox{and}\quad  X^{(4k)} = \mathbf{1}_{k\times k}\otimes X^{(4)}, 
\end{equation}
where $\mathbf{1}_{k\times k} = \mathbf{1}_k(\mathbf{1}_k)^\top$ denotes the $k\times k$ all-one matrix. Now, fix this set of cuts, we consider the set of cuts in dimension 4 induced by the duplication scheme \eqref{eq:Gen-Duplicate}: $\ell^{(4)}_i(x^{(4)}, X^{(4)}):=\ell^{(4k)}_i(x^{(4k)}, X^{(4k)})$. Then $\ell_1^{(4)},\cdots, \ell^{(4)}_m$ are a set of valid linear cuts in dimension 4. By Theorem \ref{thm:Gap-General} and its proof, there exists a certificate $\big(\bar{x}^{(4)}, \bar{X}^{(4)}\big)\in\mathcal{S}_4$ and an instance $\big(\bar Q^{(4)},\bar c^{(4)}\big)\in \mathcal N_\infty^{4}$ such that 
$$p^{(4)}-L_4\big(\bar{x}^{(4)}, \bar{X}^{(4)}\big)>{\delta_{\rm Gap}}/{(m+1)^2}.$$
As this certificate point is feasible to the SDP relaxation, its $k$-fold duplication in the form of \eqref{eq:Gen-Duplicate} will also be feasible to the original SDP relaxation in dimension $4k$ because 
$$\ell^{(4k)}_i\big(\bar{x}^{(4k)}, \bar{X}^{(4k)}\big)=\ell^{(4)}_i\big(\bar{x}^{(4)}, \bar{X}^{(4)}\big)\geq0, \forall i\in[m],$$
and the Schur complement $\bar{X}^{(4k)}-\bar{x}^{(4k)}(\bar{x}^{(4k)})^\top = \mathbf{1}_{k\times k}\otimes(\bar{X}^{(4)}-\bar{x}^{(4)}(\bar{x}^{(4)})^\top)\succeq0$. Hence, for the $k$-fold duplicated instance $(\bar Q^{(4k)},\bar c^{(4k)})$ constructed in the form \eqref{eq:QC-duplicate}, we have 
$$p^{(4k)}-L_{4k}\big(\bar{x}^{(4k)}, \bar{X}^{(4k)}\big) = k\cdot \left(p^{(4)}-L_4\big(\bar{x}^{(4)}, \bar{X}^{(4)}\big)\right) >{k\cdot\delta_{\rm Gap}}/{(m+1)^2}.$$
Therefore, for a general dimension $n>4$ and $n\,(\mathbf{mod}\, 4) \not\equiv0$, and fix an arbitrary set of valid linear cuts
$\ell_1^{(n)},\ldots,\ell_m^{(n)}$, we can set $k = \lfloor n/4\rfloor$, and padding the above $=4k$-dimensional certificate-instance pair by zeros will give an $\Omega({n}/{(m+1)^2})$ relaxation gap:
\begin{equation}
\label{eq:padding}
\begin{aligned}
&\Big(\bar x^{(n)},\bar X^{(n)}\Big) := \Big( \Big(\begin{aligned}
\bar x^{(4k)}\\[-1.5mm]
0\,\,\,\,\,\end{aligned}\Big), \Big(\begin{aligned}
\bar X^{(4k)}\,\,\, & 0\,\\[-1.5mm]
0\quad & 0\,
\end{aligned}\Big)\Big),\\
&\Big(\bar Q^{(n)},\bar c^{(n)}\Big) := \Big(\Big(\begin{aligned}
\bar Q^{(4k)}\,\,\, & 0\,\\[-1.5mm]
0\quad & 0\,
\end{aligned}\Big),  
\Big(\begin{aligned}
\bar c^{(4k)}\\[-1.5mm]
\mathbf{1}\,\,\,\,\,\end{aligned}\Big)\Big),
\end{aligned}
\end{equation}
we only to notice that the $4k$-dimensional certificate-instance pair is selected w.r.t. the specific set of cuts: $\ell_j^{(4k)}\big(x,X\big)\! := \!\ell_j^{(n)} \!\Big(\! (x,0),\! \Big(\begin{aligned}
X\,\, & 0\,\\[-1.5mm]
0\,\, & 0\,
\end{aligned}\Big)\!\Big),$ where $(x,X)\in\mathbb{R}^{4k}\times\mathbb{S}^{4k}$ are $4k$-dimensional lifted variables.

\begin{theorem}
\label{thm:Omega(n/m^2)}
For any dimension $n\geq4$ and any $m$ valid linear cuts, there exists a normalized submodular BCQP instance $(Q,c)\in\mathcal{N}^n_{\infty}$ such that the SDP relaxation gap is $\Omega(n/(m+1)^2)$.    
\end{theorem}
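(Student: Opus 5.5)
The plan is to assemble the theorem from the four-dimensional result (Theorem~\ref{thm:Gap-General}) together with the duplication and padding constructions described just above. Fix $n\ge4$ and an arbitrary family of valid cuts $\ell_1^{(n)},\dots,\ell_m^{(n)}$, and set $k=\lfloor n/4\rfloor\ge1$, so that $4k\ge n-3$ and hence $k\ge n/7$ for every $n\ge 4$.

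\textbf{Step 1: reduce from dimension $n$ to dimension $4k$.} Define $\ell_j^{(4k)}$ by zero-padding, as in \eqref{eq:padding}. I will check that each $\ell_j^{(4k)}$ is valid. For $x\in[0,1]^{4k}$, the padded point $(x,0)\in[0,1]^n$ has lifted matrix equal to the zero-padded $xx^\top$, so validity transfers directly.

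\textbf{Step 2: reduce from dimension $4k$ to dimension $4$.} Define $\ell_j^{(4)}(x,X):=\ell_j^{(4k)}(\mathbf 1_k\otimes x,\mathbf 1_{k\times k}\otimes X)$. This is a linear function of $(x,X)$. It is valid because, for $x\in[0,1]^4$, we have $\mathbf 1_k\otimes x\in[0,1]^{4k}$ and $(\mathbf 1_k\otimes x)(\mathbf 1_k\otimes x)^\top=\mathbf 1_{k\times k}\otimes xx^\top$. One subtlety is the form of these cuts. A duplicated cut may contain $X_{ii}$ terms as well as off-diagonal terms between copies, so $\ell_j^{(4)}$ is of the general form \eqref{eq:general-cuts}. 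That is exactly the setting of Theorem~\ref{thm:Gap-General}.

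\textbf{Step 3: apply Theorem~\ref{thm:Gap-General} in dimension 4.} This gives an instance $(\bar Q^{(4)},\bar c^{(4)})$ and a point $(\bar x^{(4)},\bar X^{(4)})\in\mathcal S_4$ feasible for all $\ell_j^{(4)}$ with gap greater than $\delta_{\rm Gap}/(m+1)^2$. I will need two facts about this instance.
\begin{itemize}
\item The instance lies in $\mathcal N_\infty^4$. The perturbed instance from Lemma~\ref{lemma:perturb-optimality} has entries at most $1$ in magnitude: $Q^\eta$ has entries bounded by $1$, and $c^{\eta,\theta}$ differs from $c^\eta$ by a small perturbation. One must confirm that $|c_4^{\eta,\theta}|\le1$ and that $c_3^{\eta,\theta}$ stays within $[-1,1]$. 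Here $c_3=0$ and $c_4=-\eta$, so a small enough $\Theta^\eta$ works. If necessary, I will shrink $\Theta^\eta$ or rescale by a constant, which only changes $\delta_{\rm Gap}$.
\item The optimal value. The duplicated and padded problem has optimal value $k\,p^{(4)}$, because the padded coordinates have objective $x_j$ with coefficient $c_j=1$ and are minimized at $0$.
\end{itemize}

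\textbf{Step 4: lift the certificate and conclude.} Lift $(\bar x^{(4)},\bar X^{(4)})$ by \eqref{eq:Gen-Duplicate} and then pad it. Feasibility in dimension $n$ follows from three observations.
\begin{itemize}
\item The cut values are preserved by construction.
\item The PSD condition holds because the Schur complement equals $\mathbf 1_{k\times k}\otimes(\bar X^{(4)}-\bar x^{(4)}\bar x^{(4)\top})\succeq0$, since it is a Kronecker product of PSD matrices. Padding with zeros preserves this.
\item The objective is additive. The block-diagonal $\bar Q^{(4k)}$ only sees the diagonal blocks of $\mathbf 1_{k\times k}\otimes\bar X^{(4)}$, each of which equals $\bar X^{(4)}$. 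The padded coordinates contribute $0$.
\end{itemize}
The gap is therefore $k$ times the four-dimensional gap, which exceeds $k\delta_{\rm Gap}/(m+1)^2\ge n\delta_{\rm Gap}/(7(m+1)^2)$.

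The main obstacle I expect is ensuring that the family of four-dimensional cuts produced by duplication falls within the scope of Theorem~\ref{thm:Gap-General}, and that its constant $\delta_{\rm Gap}$ is uniform over all such families. Theorem~\ref{thm:Gap-General} is stated for arbitrary valid cuts with an absolute constant, so this should be immediate. I will still double-check that the constants $C_D$ and $C_R$ in its proof are independent of the cuts, which holds because they depend only on the fixed certificate and on the degree and dimension.
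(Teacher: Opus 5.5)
Your proposal is correct and follows the paper's argument. You zero-pad the $n$-dimensional cuts down to dimension $4k$, induce $4$-dimensional cuts via the general duplication \eqref{eq:Gen-Duplicate}, invoke Theorem~\ref{thm:Gap-General}, and lift the certificate back using the Kronecker-product Schur complement and the additivity of the block-diagonal objective. Your explicit checks that the perturbed instance stays in $\mathcal{N}_\infty^4$ and that $\lfloor n/4\rfloor\ge n/7$ fill in details the paper leaves implicit.
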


\noindent\textbf{BQP duplication.} Note that Proposition \ref{prop:BQP-certificate} provides an $m$-independent relaxation gap of $1/100$ for SDP relaxations with only BQP valid cuts. In this part, we utilize this property to construct a sharper $\Omega(n)$ relaxation gap for this case. Given the $4$-dimensional certificate $\big(x^{\rm BQP}, X^{\rm BQP}\big)$ from Proposition \ref{prop:BQP-certificate}, let us keep the $k$-fold duplication $\bar x^{(4k)} = \mathbf{1}_k\otimes x^{\rm BQP}$ while modifying
\begin{equation}
\label{eq:BQP-Duplicate}
\bar X^{(4k)} = I_{k}\otimes X^{\rm BQP} + J_k\otimes(x^{\rm BQP}\otimes(x^{\rm BQP})^\top), 
\end{equation}
with $J_k:=\mathbf{1}_{k\times k}-I_k$. A remark is that, we adopt this BQP-specific duplication because the general duplication \eqref{eq:Gen-Duplicate} cannot preserve BQP membership of the duplicated certificate. Reversely, \eqref{eq:BQP-Duplicate} is nonlinear and hence do not apply to the derivation of Theorem \ref{thm:Omega(n/m^2)} due to feasibility issue. Let us select the $4$-dimensional certificate $\big(x^{(4)},X^{(4)}\big) = \big(x^{\rm BQP}, X^{\rm BQP}\big)$ from Proposition \ref{prop:BQP-certificate}, then the probabilistic representation of $\mathcal{B}_n$ indicates that there is a random vector $Z\in\{0,1\}^4$ such that $\mathbb{E}[Z]=x^{\rm BQP}$ and $ \mathbb{E}[Z_iZ_j]=X^{\rm BQP}_{ij}$ for $\forall\, 1\leq i<j\leq 4.$ Therefore, taking $k$ independent copies $Z^1,\ldots,Z^k$ yields exactly the duplicated moments in
\eqref{eq:BQP-Duplicate} and hence $\big(\bar x^{(4k)},\big(\bar X^{(4k)}_{ij}\big)_{i<j}\big)\in \mathcal{B}_{4k}.$ Therefore, in dimension $4k$, consider the instance $(\bar Q^{(4k)}, \bar c^{(4k)})$ constructed from \eqref{eq:QC-duplicate} with base instance $(\bar Q^{(4)}, \bar c^{(4)})$ from the parameterized family \eqref{eq:bad-family} with $a=b=1/2$, we have 
$$p^{(4k)}-L_{4k}\big(\bar{x}^{(4k)}, \bar{X}^{(4k)}\big) = k\cdot \left(p^{(4)}-L_4\big(\bar{x}^{(4)}, \bar{X}^{(4)}\big)\right) =k/100.$$
Then applying the padding technique again gives the following theorem.

\begin{theorem}
\label{thm:Omega(n)-BQP-GP}
For any $n\geq4$, there exists a normalized submodular BCQP instance $(Q,c)\in\mathcal{N}^n_{\infty}$ such that the SDP relaxation gap, if only with BQP valid cuts, is at least $\lfloor n/4\rfloor/100 = \Omega(n)$.
\end{theorem}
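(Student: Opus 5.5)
The plan is to combine the four-dimensional BQP certificate of Proposition~\ref{prop:BQP-certificate}, the independent-copies (BQP) duplication \eqref{eq:BQP-Duplicate}, and the zero padding \eqref{eq:padding}. Fix $n\ge4$ and set $k=\lfloor n/4\rfloor\ge1$. The base instance is $(\bar Q^{(4)},\bar c^{(4)})=(Q(\tfrac12,\tfrac12),c(\tfrac12,\tfrac12))$ from \eqref{eq:bad-family}. Its entries lie in $[-1,1]$ with nonpositive off-diagonals, so it belongs to $\mathcal N^4_\infty$. By Proposition~\ref{prop:QP-opt=0} it has $p^{(4)}=0$.

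\textbf{Instance in dimension $n$.} I form $(\bar Q^{(4k)},\bar c^{(4k)})$ by \eqref{eq:QC-duplicate}. Block-diagonal Kronecker duplication preserves $\|\cdot\|_\infty\le1$ and off-diagonal nonpositivity, and the objective separates across blocks, so $p^{(4k)}=0$. I then pad as in \eqref{eq:padding}: zero rows and columns in $Q$, and linear coefficient $1$ on the extra $n-4k$ coordinates. The padded instance stays in $\mathcal N^n_\infty$. On the extra coordinates the objective is $\sum x_j\ge0$, minimized at $0$, so $p^{(n)}=0$.

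\textbf{Certificate and membership.} I take $\bar x^{(4k)}=\mathbf 1_k\otimes x^{\rm BQP}$ and $\bar X^{(4k)}$ from \eqref{eq:BQP-Duplicate}, and pad both with zeros. Two properties must hold.
\begin{itemize}[leftmargin=*]
\item \emph{PSD.} I realize the point as moments. Proposition~\ref{prop:BQP-certificate} gives $(x^{\rm BQP},X^{\rm BQP})\in\mathcal S_4$, so I use a Gaussian-type random vector $W\in\mathbb R^4$ with $\mathbb E W=x^{\rm BQP}$ and $\mathbb E WW^\top=X^{\rm BQP}$. Taking $k$ independent copies gives first moment $\bar x^{(4k)}$ and second moment exactly $\bar X^{(4k)}$: the diagonal blocks are $X^{\rm BQP}$, and independence makes the off-diagonal blocks $x^{\rm BQP}(x^{\rm BQP})^\top$. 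Hence the lifted matrix is a second-moment matrix and is PSD. Padding with zeros corresponds to appending deterministic zeros, so PSD is preserved.
\item \emph{BQP membership.} Proposition~\ref{prop:BQP-certificate} gives $y^{\rm BQP}$ with $(x^{\rm BQP},(X^{\rm BQP}_{ij})_{i<j})\in\mathcal B_4$, hence a $\{0,1\}^4$ random vector $Z$ matching these moments. Independent copies $Z^1,\dots,Z^k$, concatenated with zeros, give a $\{0,1\}^n$ random vector whose first moments and off-diagonal second moments are exactly the padded certificate. So its off-diagonal projection lies in $\mathcal B_n$.
\end{itemize}
By \citet{BurerLetchford2009}, every BQP valid cut \eqref{eq:BQP-general} is valid on $\mathcal B_n$, and such cuts involve only these coordinates. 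So the certificate satisfies every BQP valid cut, whatever family is imposed.

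\textbf{Objective value.} Since $\bar Q^{(4k)}$ is block diagonal, $L_n$ sees only the diagonal blocks $X^{\rm BQP}$ and the copies of $x^{\rm BQP}$; the padded coordinates contribute $0$. Therefore $L_n=k\cdot L_4(x^{\rm BQP},X^{\rm BQP})=-k/100$, and the gap is at least $p^{(n)}-L_n=k/100=\lfloor n/4\rfloor/100$.

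I expect the main obstacle to be the PSD verification of the duplicated matrix. Its diagonal blocks are not the outer product of the mean, so the check is not a direct Kronecker identity. The moment (independent copies) argument is my intended route. As a backup, a Schur complement computation gives $\bar X^{(4k)}-\bar x\bar x^\top=I_k\otimes(X^{\rm BQP}-x^{\rm BQP}x^{\rm BQP\top})\succeq0$. Everything else is bookkeeping.
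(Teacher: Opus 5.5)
Your proposal is correct and follows essentially the same route as the paper: the four-dimensional certificate of Proposition~\ref{prop:BQP-certificate}, the independent-copies duplication \eqref{eq:BQP-Duplicate} to stay inside $\mathcal{B}_{4k}$, block-diagonal duplication of the instance so that the gap scales by $k$, and zero padding to reach general $n$. Your explicit PSD check, $\bar X^{(4k)}-\bar x^{(4k)}(\bar x^{(4k)})^\top = I_k\otimes\bigl(X^{\rm BQP}-x^{\rm BQP}(x^{\rm BQP})^\top\bigr)\succeq 0$, supplies a step the paper leaves implicit for this duplication, and appending deterministic zeros to the random vector is a clean way to handle the padding directly in dimension $n$.
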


A short remark is that the BQP duplication \eqref{eq:BQP-Duplicate} does not change the feasibility to diagonal constraints $X_{ii}\leq x_i, i\in[n]$. As the base certificate $\big(x^{\rm BQP},X^{\rm BQP}\big)$ is feasible to this inequality in dimension $n=4$, adding these simple valid cuts to the SDP relaxation with BQP cuts does not deteriorate the $\Omega(n)$ lower bound to $\Omega(n/m^2), m\geq n$ even though they are a set of $n$ general cuts of form \eqref{eq:general-cuts}.

\section{Roadmap To Construct the Parameterized Instance Family }
\label{sec:n4-counterexample}

\subsection{A high-level geometric insights of SDP tightness }
\label{subsec:geometric-view}
First, let us discuss the underlying geometric insight of our construction, which was stated in the introduction. Fix an arbitrary set of $m\geq0$ valid linear cuts, and denote the feasible region of the SDP relaxation as 
$$\mathcal{R}_n = \left\{(x,X)\in\mathcal{S}_n: \ell_j(x,X)\ge0,  \forall  j \in [m]\right\}.$$
We have the following theorem. 


\begin{theorem}
\label{thm:geometric-insights}
The SDP relaxation \eqref{eq:sdp-strengthen} is tight for the submodular BCQP \eqref{eq:boxqp} over all instances $(Q,c)\in\mathcal{N}_\infty^n$ if and only if $\mathcal{R}_n\subseteq\mathcal{M}_n+\mathcal{D}_n$, where $\mathcal{D}_n$ is defined by \eqref{eq:D-n}.
\end{theorem}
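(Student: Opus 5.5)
The plan is to reduce everything to comparing minima of one linear functional over three convex sets. For $(Q,c)\in\mathbb{S}^n\times\mathbb{R}^n$, write $\langle (Q,c),(x,X)\rangle := Q\bullet X + c^\top x$.

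Two preliminary observations come first.

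First, $\mathcal{M}_n\subseteq\mathcal{R}_n$ always holds. Every rank-one lifted point $(x,xx^\top)$ with $x\in[0,1]^n$ lies in $\mathcal{S}_n$, and it satisfies every valid cut by Definition~\ref{definition:Valid-Cuts}. Since $\mathcal{R}_n$ is convex, the convex hull $\mathcal{M}_n$ is contained in it. Consequently the SDP value satisfies $\min_{\mathcal{R}_n}\langle (Q,c),\cdot\rangle \le \min_{\mathcal{M}_n}\langle (Q,c),\cdot\rangle$. The right-hand side equals the BCQP optimal value, because a linear functional attains the same minimum over a compact set and over its convex hull. So tightness means exactly that the reverse inequality also holds.

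Second, for a submodular $(Q,c)$ and any $(0,H)\in\mathcal{D}_n$, we have $Q\bullet H=\sum_{i\neq j}Q_{ij}H_{ij}\ge 0$, since both factors are nonpositive and the diagonal of $H$ vanishes. Hence
\[
\inf_{\mathcal{M}_n+\mathcal{D}_n}\langle (Q,c),\cdot\rangle=\min_{\mathcal{M}_n}\langle (Q,c),\cdot\rangle ,
\]
where the infimum over $\mathcal{D}_n$ is $0$, attained at $H=0$.

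For the direction ($\Leftarrow$), assume $\mathcal{R}_n\subseteq\mathcal{M}_n+\mathcal{D}_n$. Then for every $(Q,c)\in\mathcal{N}_\infty^n$ the SDP value is at least $\inf_{\mathcal{M}_n+\mathcal{D}_n}\langle (Q,c),\cdot\rangle$. By the second observation this equals the QP value, and combined with the first observation this gives tightness.

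For the direction ($\Rightarrow$), I argue by contraposition. Take $p=(\bar x,\bar X)\in\mathcal{R}_n\setminus(\mathcal{M}_n+\mathcal{D}_n)$.

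The first step is to show that $\mathcal{M}_n+\mathcal{D}_n$ is closed and convex. It is the sum of a compact convex set ($\mathcal{M}_n$ is the convex hull of a compact set in finite dimension) and a closed convex cone, so it is closed and convex. Strict separation therefore gives a nonzero $(Q,c)\in\mathbb{S}^n\times\mathbb{R}^n$, with the trace inner product on the matrix part, such that
\[
\langle (Q,c),p\rangle<\inf_{\mathcal{M}_n+\mathcal{D}_n}\langle (Q,c),\cdot\rangle .
\]

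The second step is to show that $(Q,c)$ is submodular. Because $\mathcal{D}_n$ is a cone, finiteness of the infimum forces $\langle (Q,c),(0,H)\rangle\ge 0$ for all $(0,H)\in\mathcal{D}_n$. Testing with $H=-(E_{ij}+E_{ji})$ gives $-2Q_{ij}\ge 0$, that is, $Q_{ij}\le 0$ for all $i\neq j$. The diagonal of $Q$ and the vector $c$ are unconstrained, which matches the definition of $\mathcal{N}_\infty^n$.

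The third step is normalization. Divide $(Q,c)$ by $\max\{\|Q\|_\infty,\|c\|_\infty\}>0$. Positive scaling preserves the strict inequality and the sign pattern, so we obtain $(Q,c)\in\mathcal{N}_\infty^n$.

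Then the SDP value is at most $\langle (Q,c),p\rangle$, which is strictly below the QP value by the second observation. So the relaxation is not tight for this instance.

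The main care points are the closedness of $\mathcal{M}_n+\mathcal{D}_n$, which is needed for strict separation, and extracting the off-diagonal sign condition on $Q$ from the cone $\mathcal{D}_n$. Both are handled by the arguments above.
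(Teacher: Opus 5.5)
Your proposal is correct and follows the paper's proof. For sufficiency, both use $\mathcal{M}_n\subseteq\mathcal{R}_n$ and $Q\bullet H\ge 0$ on $\mathcal{D}_n$. For necessity, both strictly separate a point from the closed convex set $\mathcal{M}_n+\mathcal{D}_n$, use the cone structure to force $Q_{ij}\le 0$, and rescale into $\mathcal{N}_\infty^n$. You also make explicit two points the paper leaves implicit: why $\mathcal{M}_n+\mathcal{D}_n$ is closed (compact set plus closed cone), and why the separator is nonzero, which the normalization needs.
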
 
\begin{proof}  As all linear cuts are valid on $\mathcal{M}_n$, then $\mathcal M_n \subseteq \mathcal R_n$, and we have
\[\min_{(x,X)\in\mathcal R_n}
 L(x,X)
\leq 
\min_{(\tilde x,\tilde X)\in\mathcal M_n}
L(\tilde x,\tilde X).\]
Now suppose $\mathcal R_n \subseteq \mathcal M_n+\mathcal D_n$. For   $\forall (x,X)\in\mathcal R_n$, we can write $(x,X)=(\tilde x,\tilde X)+(0,H),$ for some $(\tilde x,\tilde X)\in\mathcal M_n$ and $(0,H)\in\mathcal D_n.$ Fix an arbitrary instance $(Q,c)\in\mathcal N_\infty^n$, denote 
$$L(x,X) = Q\bullet X + c^\top x$$ the objective function of SDP relaxation. Then $Q_{ij}\leq0$, $H_{ij}\leq0$ for $i\neq j$, and
$H_{ii}=0$ indicates that  
$$L(x,X) = L(\tilde x,\tilde X) + Q\bullet H \geq L(\tilde x,\tilde X)\geq \min \left\{L(z,Z) : (z,Z)\in\mathcal{M}_n\right\},$$
which further gives
\[\min_{(x,X)\in\mathcal R_n}
 L(x,X)
\geq 
\min_{(\tilde x,\tilde X)\in\mathcal M_n}
L(\tilde x,\tilde X).\]
Therefore, the equality holds. Also notice that the minimum value of the linear function $L$ over the lifted moment hull $\mathcal{M}_n$ equals exact the minimum value of the BCQP instance. Therefore, we know the SDP relaxation is tight if $\mathcal{R}_n\subseteq \mathcal{M}_n+\mathcal{D}_n$ holds. 

Conversely, if there exists
$(\bar x,\bar X)\in
\mathcal R_n\setminus(\mathcal M_n+\mathcal D_n)$. As $\mathcal M_n+\mathcal D_n$ is closed and convex while $(\bar x,\bar X)$ is a single point, then the
separation hyperplane theorem gives some $(\bar Q,\bar c)$ such that
\[
\bar Q\bullet\bar X+ \bar c^\top\bar{x}
<
\inf_{(x,X)\in\mathcal M_n+\mathcal D_n}
\{\bar Q\bullet X+\bar c^\top x\}.
\]
As the right hand side is bounded below, the separator $\bar Q$ must satisfy $\bar Q\bullet H\ge0$ for $\forall (0,H)\in\mathcal D_n$, 
which is equivalent to $\bar Q_{ij}\le0$ for every $i\neq j$. Therefore, for this specific choice of $(\bar Q,\bar c)$, the above infimum over $\mathcal{M}_n + \mathcal{D}_n$ can be reduced to the minimum over only $\mathcal{M}_n$ and hence equals the BCQP optimal value. With proper rescaling, we can assume
$(\bar Q,\bar c)\in\mathcal N_\infty^n$. Thus a strictly positive SDP relaxation gap exists for the instance $(\bar Q,\bar c)$, while $(\bar x,\bar X)$ can serve as a certificate.  
\end{proof}

This theorem suggests that whether the SDP relaxation is tight reduces to whether the incorporated linear cuts can cut off the whole region  $\mathcal{S}_n\backslash(\mathcal{M}_n+\mathcal{D}_n)$. Moreover, instead of randomly sampling the $(Q,c)$ instance, the proof suggests that a more reasonable approach could be searching a certificate  $(\bar{x},\bar{X})$ first and then construct the QP instance by finding a separation hyperplane.


\subsection{Numerical approach for instance construction}
\label{subsec:construction}
First, let us fix the linear cuts of $\mathcal{R}_n$ as the McCormick inequalities. Then we propose to construct the random candidate certificates by solving SDPs with randomly generated coefficients $G$ and $g$: 
$$(\tilde x,\tilde X) = \argmin_{(x,X)\in\mathcal{R}_n} \,\,G\bullet X + g^\top x.$$
These solutions are boundary extreme points of $\mathcal{R}_n$, and are  more likely to stay out of $\mathcal{M}_n+\mathcal{D}_n$. It is worth noting that the randomly generated $(G,g)$ does not necessarily give a submodular BCQP. Given this candidate certificate, we form the following separation problem 
\begin{equation}
\label{eq:separation}
\begin{aligned}
    & \min_{(Q,c)\in\mathcal{N}_\infty^n} \,\,\, Q \bullet \tilde X + c^\top \tilde x\\
    & \quad\,\,  \mathrm{s.t.} \quad\,\,\, z^{\!\top}\!  Qz \!+\! c^{\!\top}\! z\!\geq\! 0,\,\,\,\, \forall z\in[0,1]^n\!. 
\end{aligned}
\end{equation}
Then as long as we find a $(Q,c)$ with a strictly negative objective value, then it will be a non-tight instance for the SDP relaxation with McCormick inequalities. 

A remark is that one can indeed solve a more general separation problem without requiring the QP objective value to be nonnegative. This will introduce a max-min problem 
\[\max_{(Q,c)\in\mathcal{N}_\infty^n}\,\min_{z\in[0,1]^n} \Big(z^\top\! Qz+c^\top z\Big)-\Big(Q \!\bullet\! \tilde X \!+  c^{\!\top}\! \tilde x\Big) \]
for which the inner minimization can be solved by active face enumeration when $n$ is small.
Nevertheless, the current formulation \eqref{eq:separation} is already sufficient for finding a non-tight instance. 

Note that the separation problem \eqref{eq:separation} is a standard semi-infinite linear programming. In dimension $n=4$ where BCQP can be solved by brute force active face enumeration, then we can  solve it by the standard cutting-plane method \cite{hettich1993semi}. This approach can efficiently provide us an instance-certificate pair $(\bar x,\bar X,\bar c,\bar Q)$, but the resulting gap is often moderate, usually in  $10^{-5}\sim10^{-4}$ order. Then starting from this initial point, we perform an alternating block  minimization for problem
\begin{equation}
\label{eq:alternating}
\begin{aligned}
    & \min_{(Q,c)\in\mathcal{N}_\infty^n} \,\, \min_{(x,X)\in\mathcal{R}_n} \,\,\, Q \bullet X + c^\top  x\\
    & \quad\,\,  \mathrm{s.t.} \quad\,\,\, z^{\!\top}\!  Qz \!+\! c^{\!\top}\! z\!\geq\! 0,\,\,\,\, \forall z\in[0,1]^n,
\end{aligned}
\end{equation}
which further allows a minimization over the certificate point compared to \eqref{eq:separation}. Interestingly, the sequence of iterates for this alternating minimization scheme, which are also valid non-tightness certificate-instance pairs, starts to exhibit the interesting pattern of \eqref{eq:bad-family} with $a,b\approx1/2$. This leads us to the guess of the general parameterized family $(Q(a,b), c(a,b))$.

\section{Conclusion}  We study the worst-case relaxation gap of SDP relaxations for submodular BCQP. In dimension $n\geq4$, we establish an \(\Omega(n)\) gap for SDP relaxations with BQP-valid cuts and an \(\Omega(n/m^2)\) gap for \(m\) arbitrary valid linear cuts. On the one hand, this shows that the set $\mathcal{M}_n+\mathcal{D}_n$ cannot be established by incorporating finitely many valid linear cuts over the lifted space $(x, X)$. On the other hand, it also reveals a fundamental limitation of BQP-valid cuts, including commonly used McCormick, triangle, clique, and cycle inequalities, etc. Therefore how to systematically design non-BQP-valid cuts to reduce the relaxation gap remains an interesting topic for future research.

\section{Code and Data Disclosure}\label{sec:Code and Data Disclosure} 




No external datasets were used in this paper. All numerical computations were based on synthetically generated instances. The instance construction procedure and the corresponding numerical approaches are described in Section~\ref{subsec:construction}.

\bibliographystyle{abbrvnat}
\bibliography{reference}

\appendix
\renewcommand{\theHsection}{appendix.\Alph{section}}

\section{Exact rational certificate for BQP non-tightness result (Proposition \ref{prop:BQP-certificate})}
\label{app:BQP-certificate}
Select $a=b=1/2$, the parameterized family \eqref{eq:bad-family} gives:
\[
Q\left(\frac12,\frac12\right)=
\begin{pmatrix}
\frac14 & -\frac12 & 0 & 0 \\
-\frac12 & 1 & -1 & 0 \\
0 & -1 & 1 & -\frac12 \\
0 & 0 & -\frac12 & \frac14
\end{pmatrix},
\quad
c\left(\frac12,\frac12\right)=
\begin{pmatrix}
\frac12 \\
1 \\
0 \\
0
\end{pmatrix}.
\]
Select six binary points $z^1=(0,0,0,0)$, $z^2=(0,0,0,1)$, $z^3=(0,0,1,1)$, $z^4=(0,1,1,1)$, $z^5=(1,0,1,0)$, $z^6=(1,1,1,1)$. Define their lifted points (projected to $\mathcal{B}_4$) as $P_s = (z^s,(z_i^sz_j^s)_{i<j})$ for 
$s=1,\cdots,6$. We do not select other vertices of $\mathcal{B}_4$ and hence their corresponding coefficient $y_s$ in \eqref{eq:SDP-BQP-certificate} is set to 0. Next we provide the exact value of $x^{\rm BQP}, X^{\rm BQP}, y^{\rm BQP}$ as follows
\begin{align*} 
X^{\rm BQP}=
\begin{pmatrix}
\frac{19}{125} & \frac{9}{50} & \frac{41}{200} & \frac{9}{50} \\[1mm]
\frac{9}{50} & \frac{32}{125} & \frac{26}{75} & \frac{26}{75} \\[1mm]
\frac{41}{200} & \frac{26}{75} & \frac{563}{1000} & \frac{377}{600} \\[1mm]
\frac{9}{50} & \frac{26}{75} & \frac{377}{600} & \frac{371}{500}
\end{pmatrix},\quad  
x^{\rm BQP}=
\begin{pmatrix}
\frac{41}{200} \\[1mm]  \frac{26}{75} \\[1mm] \frac{49}{75} \\[1mm] \frac{159}{200} \end{pmatrix}. 
\end{align*}
Then $(x^{\rm BQP},X^{\rm BQP})\in\mathcal{S}_4$. Select $$y^{\rm BQP}=
\left(\frac{9}{50},\frac{1}{6},\frac{169}{600},\frac{1}{6},\frac{1}{40},\frac{9}{50},0,\cdots,0\right)\in\Delta_{16}.$$
Then it holds that 
$$\Big(x^{\rm BQP},\big(X^{\rm BQP}_{ij}\big)_{i<j}\Big) = y_1^{\rm BQP}\cdot P_1+\cdots+y_6^{\rm BQP}\cdot P_6,$$
and the objective value is 
\[Q\left(\frac12,\frac12\right)\bullet X^{\rm BQP} + c\left(\frac12,\frac12\right)\bullet x^{\rm BQP} = -1/100<0.\]

\section{Exact conic certificate for face-active cuts (Proposition \ref{prop:conic-certificate})}
\label{app:conic-certificate}
The certificate $(\bar{x},\bar{X})$ are given as follows:
\[
\bar{x}=\begin{pmatrix}
\frac{1}{6}\\[1mm]\frac{1}{2}\\[1mm]1\\[1mm]\frac{9}{8}
\end{pmatrix},\qquad
\bar{X}=\begin{pmatrix}
\frac{7}{72} & \frac{1}{8} & \frac{1}{6} & \frac{1}{8}\\[1mm]
\frac{1}{8} & \frac{7}{24} & \frac{1}{2} & \frac{1}{2}\\[1mm]
\frac{1}{6} & \frac{1}{2} & 1 & \frac{9}{8}\\[1mm]
\frac{1}{8} & \frac{1}{2} & \frac{9}{8} & \frac{87}{64}
\end{pmatrix}.
\]
Direct computation gives $(\bar{x},\bar{X})\in\mathcal{S}_4$. Select the grid points as
$$\xi^1=(0,{1}/{3},1,1), \qquad  \xi^2=({2}/{3},1,1,1),\qquad  \xi^3=({1}/{3},0,1,0).$$
And set the conic combination coefficients as 
$$\bar\lambda_1 = \frac{15}{16},\qquad \bar\lambda_2 = \frac{3}{16},\qquad\bar\lambda_3 = \frac{1}{8}.$$
Then we have 
\begin{equation}
\label{eq:conic-combination-Psi}
\Psi(\bar{x},\bar{X}) = \bar\lambda_1\psi(\xi^1) + \bar\lambda_2\psi(\xi^2)+\bar\lambda_3\psi(\xi^3)
\end{equation}
ensures that $\Psi(\bar{x},\bar{X}) \in \mathcal{K} \subsetneq \text{cl}(\mathcal{K})$. 
Finally, substituting $\bar{x}$ and $\bar{X}$ to the SDP objective gives:
$$L_\eta(\bar{x},\bar{X})-f_\eta(x^*) = - \frac{25}{2304} + \frac{31}{64}\eta<-\frac{1}{200}, \quad \forall \eta\leq\frac{1}{100}.$$  

\section{Proof of Lemma \ref{lemma:perturb-optimality} }
\label{app:perturb-optimality}
\begin{proof}
Note that $x^*(\theta):=(0,0,\theta_1,\theta_2)$, for any feasible $x\in[0,1]^4$, write $d = x-x^*(\theta)$, then we have $d_1,d_2\geq0$. Because $f_{\eta,\theta}$ is a quadratic function, and use the relationship $\theta_1+d_3 = x_3$, we know 
\begin{align*}
    & \Delta_f = d^\top Q^{\eta,\theta}d + d^\top\nabla f_{\eta,\theta}(x^*(\theta))\\
    & \!\!= \frac{d_1}{2}\!+\!(1-2x_3)d_2 \!+\! \left(\frac{d_1}{2}-d_2\right)^2\!\!\! + \left(d_3-\frac{d_4}{2}\right)^2 \!\!+ \eta d_4^2,
\end{align*} 
where we write $\Delta_f := f_{\eta,\theta}(x)-f_{\eta,\theta}(x^*(\theta))$ to simplify the notation. When $x_3\leq1/2$, every term in the above difference is nonnegative, and only when $d=0$, it achieves the minimum $0$. On the other hand, when $x_3>1/2$, the above difference can be rewritten as 
\begin{align*}
\Delta_f = \left(\frac{d_1}{2}-d_2 + x_3 - \frac12\right)^2\!\!\!+d_1(1-x_3) + H_\theta(x_3,x_4) 
\end{align*}
where $H_\theta(x_3,x_4)$ is quadratic function of $x_3,x_4$ and $\theta$. We do not really need to write down its general form as it is too lengthy. Instead, we only need to notice that at $\theta = \theta^* = (1/4,1/2)$, we have 
\begin{align*}
&H_{\theta^*}(x_3,x_4)=\left(x_3-\frac{x_4}{2}\right)^2\!\!-\left(x_3-\frac12\right)^2\!\!+\eta\left(x_4-\frac12\right)^2 
\end{align*}  
and its minimum value on $(x_3,x_4)\in[1/2,1]\times[0,1]$ is $\frac{\eta}{4(1+4\eta)}$. Consequently, when $x_3\geq1/2$, we have $\Delta_f\geq H_{\theta^*}(x_3,x_4) \geq\frac{\eta}{4(1+4\eta)}>0$, strictly. Hence, due to the continuity of the optimal value in terms of $\theta$. There exists an open neighborhood $\Theta^\eta\ni\theta^*$ such that $\Delta_f\geq H_{\theta}(x_3,x_4)>0$ on $[1/2,1] \times [0,1]$. Therefore, for $\theta$ in this neighborhood, $\Delta_f\geq0$ for all $x\in[0,1]^4$ and hence $x^*(\theta)$ is the optimal solution to perturbed function $f_{\eta,\theta}$. 

Finally, as $ L_{\eta,\theta}(\bar{x},\bar{X})-f_{\eta,\theta}(x^*(\theta))$ is a continuous function of $\theta$ and $L_{\eta,\theta^*}(\bar{x},\bar{X})-f_{\eta,\theta^*}(x^*(\theta^*))<-1/200$, $\Theta^\eta$ can always be chosen small enough such that $L_{\eta,\theta}(\bar{x},\bar{X})-f_{\eta,\theta}(x^*(\theta))<-1/250$. \end{proof} 

\section{Proof of Inequality \eqref{eq:Certificate-lower-bound}}
\label{app:certificate-lower-bound}

\begin{proof} Consider an arbitrary valid cut $\ell(x,X)$  with an induced quadratic function $q_\ell(\theta)$ in the forms of \eqref{eq:general-cuts} and \eqref{eq:q-theta}, respectively. Let us inherit the notations of $\alpha$, $\psi$, and $\Psi$ from the discussion of Lemma \ref{lemma:dual-cone-alpha}, we can partition the monomials of $\ell$, cf.  \eqref{eq:general-cuts}, into a summation of two terms: 
$$\ell(x,X) = \alpha^\top\Psi(x,X) + R(x,X),$$
where 
$$R(x,X) = c + b_3x_3+b_4x_4+d_3X_{33}+h_{34}X_{34}+d_4X_{44}.$$ 
This time, as $\ell$ is not face-active on $F$, the coefficient $\alpha$ need not necessarily stay in the dual cone $\mathcal{K}^*$. As $R$ does not contain $x_1,x_2$ and the corresponding lifted variables, for any vector $x\in\mathbb{R}^4$, we have $R(x,xx^\top) = q_\ell(x_{3:4})$ where $x_{3:4}$ contains the last two coordinates of $x$. Hence, for the certificate $(\bar x, \bar X, \bar \lambda)$ in Appendix \ref{app:conic-certificate},    \eqref{eq:conic-combination-Psi} gives 
$$\alpha^\top  \Psi(\bar{x},\bar{X})  = \sum_{i=1}^3 \bar\lambda_i\alpha^\top\psi(\xi^i) = \sum_{i=1}^3 \bar\lambda_i\cdot\left(\ell(\xi^i,\xi^i(\xi^i)^\top) - q_\ell(\xi^i_{3:4})\right)\geq -\sum_{i=1}^3 \bar\lambda_i q_\ell(\xi^i_{3:4}).$$
where we use the fact that $\psi(x) = \Psi(x,xx^\top)$ for all $x$, $\bar\lambda_i\geq0$, and $\ell(\xi^i, \xi^i(\xi^i)^\top) \geq 0$. Consequently, 
$$\ell(\bar x,\bar X) =  \alpha^\top\Psi(\bar x,\bar X) + R(\bar x,\bar X) \geq R(\bar x,\bar X)-\sum_{i=1}^3 \bar\lambda_i q_\ell(\xi^i_{3:4}).$$ Then substituting the detailed values of $\bar x, \bar X, \bar\lambda, \xi^i$ in Appendix \ref{app:conic-certificate} to this inequality gives 

\[
\begin{aligned}
&\ell(\bar{x},\bar{X})
 \geq 
-\frac{1}{4}\left(c+b_3+d_3\right)
+ \frac{15}{64}d_4\\
&\quad\, = -\frac{q_\ell(1,0)}{4} \!+\! \frac{15}{32}\!\left(\!q_\ell(0,\!1) \!-\! 2q_\ell\Big(0,\!\frac12\Big) \!+\! q_\ell(0,0)\!\right)\\
&\quad\, \overset{(i)}{\geq} -\frac{19}{16}\cdot\frac{\|q_\ell\|_{[0,1]^2}}{\|q_\ell\|_{D}}\cdot \|q_\ell\|_{D}\\
& \quad\,  \overset{(ii)}{\geq}  - C_D\|q_\ell\|_{D}.
\end{aligned}
\]
Here, (i) is by $q_\ell(0,1)\geq0, q_\ell(0,0)\geq0$, and the norm $$\|q_\ell\|_{[0,1]^2}:=\max\{q(\theta):\theta\in[0,1]^2\}$$ is defined analogously to $\|\cdot\|_D$. And (ii) must hold for some constant $C_D>0$ that relies only on $D$ due to the equivalence of norms on the six-dimensional linear space of bivariate quadratic functions. 
\end{proof}

\end{document}